\author[\dagger]{Lucas De Lara}
\author[\dagger]{Alberto Gonz\'alez-Sanz}
\author[\dagger]{Jean-Michel Loubes}
\affil[\dagger]{Institut de Mathématiques de Toulouse, Université Paul Sabatier}
\title{Diffeomorphic Registration using Sinkhorn Divergences}
\date{}
\begin{document}

\maketitle

\begin{abstract}
  The diffeomorphic registration framework enables to define an optimal matching function between two probability measures with respect to a data-fidelity loss function. The non convexity of the optimization problem renders the choice of this loss function crucial to avoid poor local minima. Recent work showed experimentally the efficiency of entropy-regularized optimal transportation costs, as they are computationally fast and differentiable while having few minima. Following this approach, we provide in this paper a new framework based on Sinkhorn divergences, unbiased entropic optimal transportation costs, and prove the statistical consistency with rate of the empirical optimal deformations.
\end{abstract}

\textbf{Keywords:} Diffeomorphic Registration, Entropic Optimal Transport, Matching Estimation


\section{Introduction}
Diffeomorphic deformations describe a large class of computational frameworks whose goal is to find optimal deformations of the ambient space, defined as a diffeomorphisms generated through flow equations \citep{joshi2000landmark,beg2005computing,younes2010shapes}. They amount to solving an optimization problem involving two terms: an objective loss function characterizing in which sense the deformation should be optimal; a penalization over the kinetic energy spent by the transformation. The versatility of the problem formulation along with the appealing mathematical properties of diffeomorphisms made diffeomorphic deformations widely used in various application fields. In particular, they have been popularized for \emph{diffeomorphic registration} in medical image analysis. This task consists of constructing diffeomorphic matching functions between shapes in order to establish spatial correspondences \citep{sotiras2013deformable}. More recently, \cite{younes2020diffeomorphic} proposed to apply flows of diffeomorphisms in a machine-learning context, where the optimal deformation is designed to render the data classes linearly separable.



This paper focuses on the diffeomorphic registration problem between two shapes. More specifically, we address the setting where the shapes are represented by probability measures: a formulation that has received a growing interest over the past few years to address unlabeled landmarks \citep{glaunes2005transport,bauer2015diffeomorphic,feydy2017optimal,feydy2018global}. In this case, the objective loss function, referred as the \emph{data-fidelity loss}, is defined as a metric between probability measures. Squares of \emph{maximum mean discrepancies} (MMD), which are well-known kernel-based distances, became the canonical choice for such settings. In particular, their use for diffeomorphic registration enjoys a well-established theory \citep{glaunes2004diffeomorphic,glaunes2005transport,younes2010shapes}. However, they also suffer from important practical drawbacks.

As pointed out by \cite{feydy2017optimal}, the non convexity of the optimization problem on the diffeomorphic deformation renders the choice of the loss function crucial to avoid poor local minima, whereas an MMD possesses many. This is why they proposed to use optimal transport metrics as an alternative. More precisely, they define the data-fidelity loss as the entropy-regularized optimal transportation cost between unbalanced measures, which has two critical advantages. Firstly, it benefits from the non locality of optimal transport metrics, leading to few local minima. Secondly, entropic regularization alleviates the computational burden of standard optimal transport: it allows for fast computation and differentiation of the cost through the celebrated Sinkhorn's algorithm \citep{cuturi2013sinkhorn}. Nevertheless, while this alternative loss for diffeomorphic registration performs better experimentally, it lacks the statistical theory that was proven for squares of MMDs. Moreover, the entropic regularization induces a well-known bias making the loss not minimal between two identical measures. The latter issue motivates the employment of a \emph{Sinkhorn divergence}: a symmetric unbiased version of the standard entropy-regularized optimal transportation cost. In \citep{feydy2019interpolating}, the authors showed that Sinkhorn divergences performed significantly better than their biased counterparts for registration purpose. However, they carried out their analysis using flows of gradients (an approach reviewed by \cite{santambrogio2017euclidean}) instead of flows of diffeomorphisms.

This paper addresses \emph{diffeomorphic} registration for Sinkhorn-divergence-based fidelity losses from both a theoretical and practical viewpoint. By leveraging some recent advances on these divergences \citep{feydy2019interpolating,genevay2019sample}, we show in a statistically-driven approach that the deformation obtained by solving the optimization problem between empirical measures converges with the parametric rate $\sqrt{n}$ to its population counterpart, where $n$ is the sample size. Additionally, we illustrate the practicality of our method through numerical experiments. This furnishes a new theoretically and practically grounded framework for diffeomorphic matching of probability measures.

\paragraph{Related work} Several papers bear resemblances with our work as they combine entropic optimal transport with diffeomorphic registration at some point of their pipeline. Let us underline the major differences with our framework. The work of \cite{croquet2021unsupervised} leverages a Sinkhorn divergence as the data-fidelity loss of a regularized diffeomorphic-registration engine restricted to flows induced by \emph{stationary velocity fields} (SVF), which are notoriously not tailored to match significantly different shapes \citep{arsigny2006log}. In contrast, our approach applies to the more flexible \emph{large deformation diffeomorphic metric mapping} (LDDMM) framework where the flows are time dependent. In \citep{shen2021accurate}, the authors also interface entropic optimal transport with large diffeomorphic deformations but for a different role: optimal transport computes a prior landmark alignment instead of acting as the data attachment term. The closest approach to ours is the one of \cite{feydy2017optimal} who first suggested to use entropic optimal transport as the data-fidelity loss for diffeomporphic registration. However, they relied on the \emph{biased} transportation cost between \emph{unbalanced} measures whereas we tackle the \emph{unbiased} divergence between probabilities. Additionally, their work focuses on practical applications while we also provide theoretical background. Finally, one implementation of time-variant diffeomorphic registration driven by an unbiased Sinkhorn divergence can be found in a PhD manuscript \citep[Figure 4.6]{feydy2020geometric}. In our work, we go further by filling the theoretical gap, as well as by proposing more comprehensive experiments illustrating the behaviour of these loss functions. 

\paragraph{Outline} The rest of the paper is organized as follows. In Section~\ref{sec:prelim}, we specify the basic mathematical notations that will be used throughout the paper. In Section~\ref{sec:diffeo}, we set up the general problem we address by introducing the diffeomorphic registration framework for arbitrary data-fidelity losses. In Section~\ref{sec:ot}, we present the necessary background on optimal transport and entropic regularization, in order to properly define Sinkhorn divergences. Additionally, we study some indispensable regularity properties of entropic optimal transport. In Section~\ref{sec:main}, we state our main results, that is the existence and statistical consistency of the optimal deformations. In Section~\ref{sec:implementation}, we recall the implementation of diffeomorphic registration, and present the numerical experiments where we benchmark Sinkhorn divergences with other losses. All the proofs are deferred to Appendix~\ref{sec:proofs}, while Appendix~\ref{sec:background} recalls key mathematical tools from empirical process theory and Frechet differentiability.

\section{Preliminaries and notations}\label{sec:prelim}

In this section, we introduce the definitions and notations that will be used throughout the paper. The first part is dedicated to classes of smooth functions; the second one addresses probability measures.

\subsection{Smooth functions}

Let $d_1 \geq 1$ and $\X$ be an arbitrary subset of $\R^{d_1}$ with non-empty interior denoted by $\mathring{\X}$. For $p \geq 1$ and $d_2 \geq 1$, we define $\C^p(\X,\R^{d_2})$ as the set of $p$-continuously Frechet-differentiable functions from $\X$ to $\R^{d_2}$. We also define $\mathcal{L}^p (\R^{d_1}, \R^{d_2})$ the set of symmetric $p$-multilinear operators from $\R^{d_1}$ to $\R^{d_2}$. The $p$-th derivative of some $F \in \C^p(\X,\R^{d_2})$ is denoted by $F^{(p)}$. It maps any point $x \in \mathring{\X}$ to $F^{(p)}(x)[\cdot] \in \mathcal{L}^p (\R^{d_1}, \R^{d_2})$. By convention we set $F^{(0)} = F$. For any $L \in \mathcal{L}^p (\R^{d_1}, \R^{d_2})$, we define the operator norm as

\begin{equation*}
    \norm{L}_{op} := \sup\{\norm{L[\delta_1,\ldots,\delta_k]}\ |\ \delta_i \in \R^{d_1},\ \norm{\delta_i} \leq 1\}
\end{equation*}
where $\norm{\cdot}$ is the Euclidean norm. For example, if $F \in \C^1(\X,\R)$, then $\norm{F'(x)}_{op} = \norm{\nabla F(x)}$ where $\nabla F$ is the gradient of $F$. This enables to define, for any $F \in \C^p(\X,\R^{d_2})$, the functional norm,

\begin{equation*}
    \norm{F}_{p,\infty} := \max_{0 \leq k \leq p} \norm{F^{(k)}}_{\infty},
\end{equation*}
where $\norm{F}_{\infty} := \sup_{x \in \X} \norm{F(x)}$, and $\norm{F^{(k)}}_{\infty} := \sup_{x \in \mathring{\X}} \norm{F^{(k)}(x)}_{op}$ for $k \geq 1$. In addition, for any $R>0$ we denote by $\C^p_R(\X,\R^{d_2})$ the class of functions $F \in \C^p(\X,\R^{d_2})$ such that $\norm{F}_{p,\infty} \leq R$, and write $B_R$ for the centered Euclidean ball of radius $R$.

\subsection{Actions on probability measures}

We write $\E[X]$ for the expectation of any random variable $X$. The symbol $\otimes$ denotes the product of measures. For two measures $\mu$ and $\nu$ on $\R^d$, the relation $\mu \ll \nu$ means that $\mu$ is absolutely continuous with respect to $\nu$, that is $\left(\nu(E) = 0 \implies \mu(E)=0\right)$ for every measurable set $E \subseteq \R^d$.

We define two kinds of actions involving probability measures. Let $\mu$ be a probability measure on $\R^d$ and $f : \R^d \to \R$ be a measurable function. The action of $\mu$ on $f$ defines the real number:
\begin{equation*}
    \mu(f) := \int f \mathrm{d}\mu = \E_{X \sim \mu}[f(X)].
\end{equation*}
Now, consider a measurable function $F : \R^d \to \R^d$. The action of $F$ on $\mu$ defines a probability measure called the \emph{push-forward} measure, defined as:
\begin{equation*}
    F_\sharp \mu := \mu \circ F^{-1}(\cdot).
\end{equation*}
If a random variable $X$ follows the law $\mu$, then the image variable $F(X)$ follows the law $F_\sharp \mu$. The push-forward operation enables to write changes of variables. Formally, 
\begin{equation*}
    \int f \mathrm{d}(F_\sharp \mu) = \int (f \circ F) \mathrm{d}\mu.
\end{equation*}

\section{Diffeomorphic measure transportation}\label{sec:diffeo}

In this section we present the necessary background on diffeomorphic registration of probability measures. We refer to \citep{younes2010shapes} for a complete and precise treatment of this topic. Firstly, we recall how to define diffeomorphisms through flow equations. Secondly, we introduce the diffeomorphic measure transportation problem for arbitrary data-fidelity losses.

\subsection{Generating diffeomorphic deformations}

The diffeomorphic deformation framework can be framed as a fluid mechanics problem, where points in $\R^d$ are transported by a vector field representing a stream varying across time in the ambient space. We begin by reviewing the corresponding formalism and theory. 

For an integer $p \geq 1$ let $\mathcal{B}_p$ be the space of functions in $\C^p(\R^d,\R^d)$ whose derivatives up to order $p$ vanish to zero at infinity. This together with the norm $\norm{\cdot}_{p,\infty}$ is a Banach space. Next, denote by $V$ a Hilbert space with inner product $\langle \cdot, \cdot \rangle_V$ and norm $\norm{\cdot}_V$, and assume that $V$ is \emph{continuously embedded} in $\mathcal{B}_p$. This corresponds to the hypothesis below.
\begin{assumption}\label{hyp:embedded}
    The space $V$ is included in $\mathcal{B}_p$, and there exists a constant $c_V>0$ such that for any $v \in V$,
    \begin{equation*}
        \norm{v}_{p,\infty} \leq c_V \norm{v}_V.
    \end{equation*}
\end{assumption}
Physically, a function $v \in V$ represents a stationary vector field in the ambient space, specifying the speed vector $v(x) \in \R^d$ of the stream running at every position $x \in \R^d$. Then, define the class $L^2_V$ of vector fields $t \in [0,1] \mapsto v_t \in V$ indexed by time and space satisfying $\int^1_0 \norm{v_t}^2_V \mathrm{d}t < \infty$, which is a Hilbert space endowed with the inner product,
\begin{equation*}
    \langle v, u \rangle_{L^2_V} := \int^1_0 \langle v_t, u_t \rangle_V \mathrm{d}t.
\end{equation*}
We recall that a sequence $\{v^n\}_{n \in \N}$ in $L^2_V$ \emph{converges weakly} to $v$ if for any $u \in L^2_V$,
\begin{equation}\label{eq:weak}
\langle v^n, u \rangle_{L^2_V} \xrightarrow[n \to +\infty]{} \langle v, u \rangle_{L^2_V}.
\end{equation}
The associated norm in $L^2_V$ is given by
\begin{equation*}
    \norm{v}_{L^2_V} := \sqrt{\int^1_0 \norm{v_t}^2_V \mathrm{d}t},
\end{equation*}
and we use the notation
\begin{equation*}
    L^2_{V,M} := \{ v \in L^2_V \mid \norm{v}_{L^2_V} \leq M\}
\end{equation*}
for the centered ball of radius $M>0$ in $L^2_V$. 

We can now turn to the definition of diffeomorphic deformations. Any vector field $v \in L^2_V$ generates a deformation $\phi^v := (\phi^v_t)_{t \in [0,1]}$, function of both time and space variables, defined as the unique solution to the following \emph{flow equation}, 
\begin{equation}\label{eq:flow}
    \forall{x \in \R^d},\ \forall{t \in [0,1]},\ \ \phi_t(x) = x + \int^t_0 v_s\big(\phi_s(x)\big) \mathrm{d}s.
\end{equation}
The parametric curve $(\phi^v_t(x))_{t \in [0,1]}$ represents the trajectory across time of a point initially located at $\phi_0(x) = x \in \R^d$. Remarkably, for every $t \in [0,1]$ the transformation $\phi^v_t$ is a $p$-continuously differentiable diffeomorphism. Moreover, as a direct consequence of \citep[Theorem 5]{glaunes2005transport}, these diffeomorphic transformations are smooth over compact sets.
\begin{lemma}[Smoothness of diffeomorphic deformations]\label{lm:diffeo}
Suppose that Assumption~\ref{hyp:embedded} holds. Then for any radius $M>0$ and any compact set $K \subset \R^d$, there exists a constant $R = R((K,d);(V,p);M)>0$ such that for any $v \in L^2_{V,M}$,
\[
\max_{0 \leq k \leq p} \left\{ \sup_{t \in [0,1], x \in K} \norm{\left(\phi^v_t\right)^{(k)}(x)}_{op} \right\} \leq R.
\]
In particular, $\sup_{x \in K} \norm{x} \leq R$.
\end{lemma}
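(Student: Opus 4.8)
The plan is to establish the bound by a Grönwall-type argument applied to the linearized flow equations, proceeding by induction on the order of differentiation $k$. The existence, uniqueness and $\C^p$-regularity of the flow $\phi^v$, together with the legitimacy of differentiating \eqref{eq:flow} in the space variable under the integral sign, are supplied by \citep[Theorem 5]{glaunes2005transport}; my task is to extract uniformity over the ball $L^2_{V,M}$. The single quantitative input from the constraint $v \in L^2_{V,M}$ is that, by Assumption~\ref{hyp:embedded} and the Cauchy--Schwarz inequality,
\begin{equation*}
    \int_0^1 \norm{v_s}_{p,\infty}\,\mathrm{d}s \leq c_V \int_0^1 \norm{v_s}_V\,\mathrm{d}s \leq c_V \norm{v}_{L^2_V} \leq c_V M,
\end{equation*}
so the sup-norms of $v_s$ and of all its derivatives up to order $p$ are integrable in time with a bound governed solely by $c_V$ and $M$.

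First I would treat the base cases. For $k=0$, directly from \eqref{eq:flow}, any $x \in K$ and $t \in [0,1]$ satisfy $\norm{\phi^v_t(x)} \leq \norm{x} + \int_0^t \norm{v_s}_\infty\,\mathrm{d}s \leq \sup_{y \in K}\norm{y} + c_V M =: R_0$; since $x = \phi^v_0(x)$, this simultaneously gives the ``in particular'' claim $\sup_{x \in K}\norm{x} \leq R_0$. For $k=1$, differentiating \eqref{eq:flow} in $x$ shows the Jacobian $J_t(x) := (\phi^v_t)^{(1)}(x)$ solves $J_t(x) = I + \int_0^t (v_s)^{(1)}(\phi^v_s(x))\,J_s(x)\,\mathrm{d}s$, whence, using $\norm{(v_s)^{(1)}}_\infty \leq \norm{v_s}_{p,\infty}$ and Grönwall's inequality,
\begin{equation*}
    \norm{J_t(x)}_{op} \leq \exp\!\left(\int_0^t \norm{v_s}_{p,\infty}\,\mathrm{d}s\right) \leq e^{c_V M} =: R_1,
\end{equation*}
uniformly in $t$, $x$ and $v \in L^2_{V,M}$.

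For the inductive step $k \geq 2$, I would differentiate \eqref{eq:flow} $k$ times via the Faà di Bruno formula, writing $(\phi^v_t)^{(k)}(x)$ as a time integral of a finite sum of terms $(v_s)^{(j)}(\phi^v_s(x))\bigl[(\phi^v_s)^{(i_1)}(x),\ldots,(\phi^v_s)^{(i_j)}(x)\bigr]$ with $1 \leq j \leq k$, $i_\ell \geq 1$ and $i_1 + \cdots + i_j = k$. The only term carrying the top-order derivative $(\phi^v_s)^{(k)}$ is the one with $j=1$; every term with $j \geq 2$ involves only derivatives of strictly lower order, bounded by the induction hypothesis by constants $R_1,\ldots,R_{k-1}$. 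The decisive point is that each term is \emph{linear} in a single derivative of $v_s$, so its operator norm is at most $\norm{v_s}_{p,\infty}$ times a product of the $R_{i_\ell}$'s. Thus $(\phi^v_t)^{(k)}(x)$ satisfies
\begin{equation*}
    \norm{(\phi^v_t)^{(k)}(x)}_{op} \leq \int_0^t \norm{v_s}_{p,\infty}\,C_k\,\mathrm{d}s + \int_0^t \norm{v_s}_{p,\infty}\,\norm{(\phi^v_s)^{(k)}(x)}_{op}\,\mathrm{d}s,
\end{equation*}
with $C_k = C_k(R_1,\ldots,R_{k-1},k,d)$ collecting the combinatorial constants and lower-order bounds. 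As the first integral is at most $c_V M\,C_k$, Grönwall's inequality yields $\norm{(\phi^v_t)^{(k)}(x)}_{op} \leq c_V M\,C_k\,e^{c_V M} =: R_k$, and taking $R := \max_{0 \leq k \leq p} R_k$ finishes the argument.

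I expect the main obstacle to be the bookkeeping of the Faà di Bruno expansion, and specifically making rigorous the claim that the time-dependence always enters through at most one factor $\norm{v_s}_{p,\infty}$ rather than a higher power. This is precisely what keeps the relevant time integral at $\int_0^1\norm{v_s}_{p,\infty}\,\mathrm{d}s \leq c_V M$: had some term carried $\norm{v_s}_{p,\infty}^m$ with $m \geq 2$, the membership $v \in L^2_{V,M}$ would no longer furnish a finite $v$-independent bound, since only the squared norm is controlled in time. Once this linearity is in hand, the remaining steps are routine iterations of Grönwall's lemma.
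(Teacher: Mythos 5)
Your proof is correct, and it differs from the paper's mainly in how much it outsources. The paper's own proof shares your skeleton: the Cauchy--Schwarz reduction $\int_0^1 \norm{v_s}_{p,\infty}\,\mathrm{d}s \leq c_V\norm{v}_{L^2_V} \leq c_V M$ and the direct $k=0$ bound $\sup_{x\in K}\norm{x} + c_V M$ are identical to yours. But for the derivative orders $1 \leq k \leq p$ the paper simply invokes \citep[Theorem 5]{glaunes2005transport}, which combined with that preliminary remark yields $\sup_{t\in[0,1]}\norm{(\phi^v_t)^{(k)}}_\infty \leq c_k \exp\left(c'_k \norm{v}_{L^2_V}\right)$, and then takes the maximum over $k$. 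You instead re-derive this estimate from scratch: the Fa\`a di Bruno expansion of $(v_s\circ\phi^v_s)^{(k)}$, the observation that each term is linear in a single derivative of $v_s$ and that only the one-block partition carries $(\phi^v_s)^{(k)}$, and the Gr\"onwall induction giving $R_k = c_V M\, C_k\, e^{c_V M}$. This is essentially the argument underlying the cited theorem, so your route buys self-containedness and fully explicit constants (your bound has exactly the form $c_k\exp(c'_k M)$ of the imported one), at the cost of the bookkeeping you anticipate; the paper's version is shorter because the heavy lifting is delegated to the literature. Both yield the same dependence $R = R((K,d);(V,p);M)$, with $K$ entering only through the $k=0$ term, and your handling of the ``in particular'' claim via $x = \phi^v_0(x)$ is equivalent to the paper's.

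One small inaccuracy in your closing remark, which does not affect the validity of the proof: you claim that a hypothetical term carrying $\norm{v_s}_{p,\infty}^m$ with $m \geq 2$ would be fatal because ``only the squared norm is controlled in time.'' In fact $m=2$ would still be harmless, since $\int_0^1 \norm{v_s}_{p,\infty}^2\,\mathrm{d}s \leq c_V^2 M^2$ by Assumption~\ref{hyp:embedded}, so Gr\"onwall would still close with a bound $\exp(c_V^2M^2)$; it is only powers $m \geq 3$ that membership in $L^2_{V,M}$ fails to control. Since the Fa\`a di Bruno terms are genuinely linear in the derivatives of $v_s$, the point is moot.
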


In practice, the space of vector fields $V$ is constructed through the choice of a kernel function. This is enabled by Assumption~\ref{hyp:embedded} which entails that $V$ is a reproducing kernel Hilbert space (RKHS), characterized by a unique non-negative symmetric matrix-valued kernel function $\operatorname{Ker} : \R^d \times \R^d \to \R^{d \times d}$. In particular, the choice of the kernel function sets the order of regularity $p$ of the vector fields. For instance, the typical choice of a Gaussian kernel, that is
\begin{equation}\label{eq:ker}
\operatorname{Ker}(x,y) := \frac{1}{\sqrt{2 \pi \sigma^2}} \exp\left( - \frac{\norm{x-y}^2}{2\sigma^2}\right) I_d
\end{equation}
where $\sigma>0$ is the bandwidth parameter and $I_d$ the identity matrix, leads to $p = +\infty$.

\subsection{Diffeomorphic matching of distributions}

In general, diffeomorphic deformation frameworks amount to finding solutions to Equation~\eqref{eq:flow} that are optimal in some sense. In this work, we focus on the diffeomorphic measure transportation framework, which aims at matching two probability measures.

Formally, let $\Lambda$ be a positive loss function between probability measures, and set $\alpha$ and $\beta$ two probabilities on the ambient space $\R^d$. For a given regularization weight $\lambda > 0$, an optimal matching function between $\alpha$ and $\beta$ is a diffeomorphism $\phi^v$ solution to \eqref{eq:flow} where $v$ minimizes
\begin{equation}\label{eq:problem}
    J_\lambda(v) := \Lambda({\phi^v_1}_\sharp \alpha,\beta) + \lambda \norm{v}^2_{L^2_V}.
\end{equation}
The first term of the objective function \eqref{eq:problem} is the \emph{data-fidelity loss}, which tends to match ${\phi^v_1}_\sharp \alpha$ with $\beta$, while the second term is the regularizer, which penalizes the kinetic energy spent by the trajectories $(\phi^v_t)_{t \in [0,1]}$, keeping them as close as possible to the identity function. The parameter $\lambda$ governs the trade-off between the two contributions. The objective $J_\lambda$ always admits minimizers provided that the term $v \in L^2_V \mapsto \Lambda({\phi^v_1}_\sharp \alpha,\beta) \in \R^+$ is weakly continuous. For a minimizer $v^*$, the function $\phi^{v^*}_1$ is an optimal matching between $\alpha$ and $\beta$, and the family $(\phi^{v^*}_t)_{t \in [0,1]}$ provides an approximated interpolation between the two measures.

In practical settings, one typically does not have access to the full probability measures $\alpha$ and $\beta$ but to empirical observations. This naturally raises the question of estimating an optimal matching function between $\alpha$ and $\beta$ on the basis of independent samples. Concretely, let $x_1,\ldots,x_n \sim \alpha$ and $y_1,\ldots,y_n \sim \beta$ be independent samples, and define the empirical probability measures $\alpha_n := n^{-1} \sum^n_{i=1} \delta_{x_i}$ and $\beta_n := n^{-1} \sum^n_{j=1} \delta_{y_j}$. Plugging these discrete measures in the original objective function \eqref{eq:problem} leads to the following empirical objective function:
\begin{equation}\label{eq:emp_problem}
    J_{\lambda,n}(v) := \Lambda({\phi^v_1}_\sharp \alpha_{n},\beta_{n}) + \lambda \norm{v}^2_{L^2_V}.
\end{equation}
In Theorem~\ref{thm:main} we prove under some assumptions that if the data-fidelity loss $\Lambda$ is a \emph{Sinkhorn divergence}, a divergence derived from entropic optimal transport, then any sequence of minimizers $\{v^n\}_{n \in \N}$ of the empirical problem \eqref{eq:emp_problem} converges up to the extraction of a subsequence to a minimizer of the population problem \eqref{eq:problem} as the sample size $n$ increases to infinity.

\section{Entropic optimal transport}\label{sec:ot}

In this section, we first briefly present the necessary background on optimal transport and entropic regularization, in order to properly define Sinkhorn divergences. We refer to \citep{villani2003topics,villani2008optimal,peyre2019computational} for further insight on these topics. Then, we introduce some properties of these divergences, which will be useful to later demonstrate the main results of this paper.

\subsection{Transportation costs and Sinkhorn divergences}

Let $\alpha$ and $\beta$ be two probability measures on $\X$ a subset of $\R^d$, and $C : \R^d \times \R^d \to \R^+$ a positive ground cost function. Typically, $C(x,y) := \norm{x-y}^2$. The optimal transportation cost with respect to $C$ between $\alpha$ and $\beta$ is defined as,
\begin{equation}\label{eq:tc}
    \T_C(\alpha,\beta) := \min_{\pi \in \Pi(\alpha,\beta)} \int_{\X \times \X} C(x,y) \mathrm{d}\pi(x,y),
\end{equation}
where $\Pi(\alpha,\beta)$ is the set of couplings admitting $\alpha$ as first marginal and $\beta$ as second marginal. In particular, for an integer $k \geq 1$ and $D$ a distance on $\X$, the quantity $(\T_{D^k})^{\frac{1}{k}}$ yields a distance between measures referred as the \emph{Wasserstein distance} of order $k$. Transportation costs and optimal transport distances became popular in many machine-learning-related problems for their appealing geometric properties, but suffer from being computationally challenging in practice. This triggered a growing literature on fast approximations of \eqref{eq:tc}, the most popular being entropy-regularized versions, which can be computed through the Sinkhorn algorithm \citep{cuturi2013sinkhorn}. For $\varepsilon > 0$, the \emph{entropy-regularized} transportation cost w.r.t. $C$ is defined as
\begin{equation}\label{eq:primal}
    \T_{C,\varepsilon}(\alpha,\beta) := \min_{\pi \in \Pi(\alpha,\beta)} \int_{\X \times \X} C(x,y) \mathrm{d}\pi(x,y) + \varepsilon \text{KL}(\pi|\alpha \otimes \beta),
\end{equation}
where $\text{KL}(\mu|\nu)$ denotes the \emph{Kullback-Leibler} divergence between probability measures $\mu$ and $\nu$ given by $\int \log\big(\frac{\mathrm{d}\mu}{\mathrm{d}\nu}(z)\big)\mathrm{d}\mu(z)$ if $\mu \ll \nu$, and $+\infty$ otherwise.

Critically, the entropic transportation cost $\T_{C,\varepsilon}$ suffers from the so-called \emph{entropic bias}, that is $\T_{C,\varepsilon}(\alpha,\alpha) \neq 0$ in general. As illustrated in \citep{feydy2019interpolating}, this entails that the minimum of $\T_{C,\varepsilon}(\alpha,\cdot)$ is not reached at $\alpha$ but at a shrunken version of $\alpha$ with smaller support, making the entropic cost an unreliable loss function. The Sinkhorn divergence was originally introduced to fix this undesirable effect. It is formally defined as
$$
    S_{C,\varepsilon}(\alpha,\beta) := \T_{C,\varepsilon}(\alpha,\beta) - \frac{1}{2}\T_{C,\varepsilon}(\alpha,\alpha) - \frac{1}{2}\T_{C,\varepsilon}(\beta,\beta).
$$
As aforementioned, using a non-local similarity measure such as an entropic-optimal-transport cost instead of a local similarity measure such as a squared MMD leads to fewer local solutions when minimizing \eqref{eq:emp_problem}. Moreover, it does not suffer from the computational burden of standard optimal transport. This is why \cite{feydy2017optimal} advocated the use of the entropy-regularized transportation cost \eqref{eq:primal} for diffeomorphic registration, providing empirical evidences of the benefits of this approach. However, they did not rely on the unbiased Sinkhorn divergences, for which little was known until \citep{feydy2019interpolating} that demonstrated several key properties. In particular, if $C$ is continuous, $e^{-\frac{C}{\varepsilon}}$ defines a positive universal kernel, and $\X$ is compact, then $S_{C,\varepsilon}$ is symmetric positive definite, smooth and convex in each of its input distributions. Additionally, in contrast to the standard regularized transportation cost, it metrizes the convergence in law. In particular, these properties hold for the classical cost functions $C(x,y) := \norm{x-y}$ and $C(x,y) := \norm{x-y}^2$ defined on compact domains. The goal of this paper is precisely to use a Sinkhorn divergence for the data-fidelity loss, while providing statistical guarantees. The demonstrations are based on the dual formulation of entropic optimal transport for which we derive some important results next.

\subsection{Regularity of the dual formulation}

The minimization problem \eqref{eq:primal} has the following dual formulation,
\begin{equation}\label{eq:dual}
    \T_{C,\varepsilon}(\alpha,\beta) = \sup_{f,g \in \mathcal{C}(\X,\R)} \int_{\X} f(x) \mathrm{d}\alpha(x) + \int_{\X} g(y) \mathrm{d}\beta(y)\\ -\varepsilon \int_{\X \times \X} e^{\frac{f(x)+g(y)-C(x,y)}{\varepsilon}} \mathrm{d}\alpha(x) \mathrm{d}\beta(y) + \varepsilon.
\end{equation}
The functions $f$ and $g$ are referred as \emph{potentials}. Note that Equation~\eqref{eq:dual} can also be compactly written as,
\begin{equation*}
    \T_{C,\varepsilon}(\alpha,\beta) = \sup_{f,g \in \mathcal{C}(\X,\R)} (\alpha \otimes \beta) \left(h^{f,g}_{C,\varepsilon}\right),
\end{equation*}
where
\begin{equation}\label{eq:dual2}
    h_{C,\varepsilon}^{f,g}(x,y) := f(x)+g(y)-\varepsilon e^{\frac{f(x)+g(y)-C(x,y)}{\varepsilon}} + \varepsilon.
\end{equation}
We call the function $h_{C,\varepsilon}^{f,g}$ the \emph{global potential}. It will play a key role in the proofs.

A remarkable property of entropic optimal transport, investigated in \citep{genevay2019sample,feydy2019interpolating}, is that the potentials of the dual formulation inherit the regularity of the ground cost function $C$ if the measures $\alpha$ and $\beta$ are compactly supported. This setting will be useful to derive statistical guarantees. More specifically, it allows to restrict the set of feasible potentials to smooth functions regardless of the involved probability measures, as stated in the next lemma which readily follows from \citep[Proposition 1]{genevay2019sample} (see also \citep[Lemma 4.1]{gonzalez2022improved} for the particular case of the quadratic ground cost).
\begin{lemma}[Smoothness of the optimal potentials]\label{lm:dual}
Let $\mu$ and $\nu$ be two measures on a compact set $K \subset \R^d$, and suppose that the ground cost function $C$ belongs to $\C^q(\R^d \times \R^d, \R^+)$ with $q \geq 1$. Then, there exists a constant $m = m((K,d);(C,q);\varepsilon)>0$ such that
$$
\T_{C,\varepsilon}(\mu,\nu) = \sup_{f,g \in \C(K,\R)} (\mu \otimes \nu)\left(h^{f,g}_{C,\varepsilon}\right) =
\sup_{f,g \in \C^q_m(K,\R)} (\mu \otimes \nu)\left(h^{f,g}_{C,\varepsilon}\right).
$$
\end{lemma}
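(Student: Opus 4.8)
The plan is to establish the two claimed equalities separately. The first equality, $\T_{C,\varepsilon}(\mu,\nu) = \sup_{f,g \in \C(K,\R)} (\mu \otimes \nu)(h^{f,g}_{C,\varepsilon})$, is simply the dual formulation \eqref{eq:dual} restated with the domain $\X$ replaced by the compact set $K$ on which the measures are supported. Since the potentials are integrated only against $\mu \otimes \nu$, whose support lies in $K \times K$, restricting attention to continuous functions on $K$ changes nothing in the value of the supremum. So the real content is the second equality: that the supremum over all continuous potentials coincides with the supremum over the much smaller class $\C^q_m(K,\R)$ of functions whose $\C^q$-norm is bounded by a fixed constant $m$ depending only on $(K,d)$, $(C,q)$, and $\varepsilon$.

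**The key mechanism** is that the unconstrained dual problem is solved by the \emph{Sinkhorn potentials}, which satisfy the fixed-point (Schrödinger) equations obtained by setting the first variation of the dual objective to zero. Explicitly, the optimal $(f,g)$ obey
\begin{equation*}
f(x) = -\varepsilon \log \int_K e^{\frac{g(y)-C(x,y)}{\varepsilon}}\, \mathrm{d}\nu(y), \qquad g(y) = -\varepsilon \log \int_K e^{\frac{f(x)-C(x,y)}{\varepsilon}}\, \mathrm{d}\mu(x).
\end{equation*}
I would first invoke the existence of such optimizers and then exploit these equations as a bootstrapping device: because the right-hand sides are smooth-log-sum-exp averages of translates of $C$, the potentials inherit the regularity of $C$. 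Concretely, this is exactly the statement of \citep[Proposition 1]{genevay2019sample}, which bounds the derivatives of the optimal Sinkhorn potentials up to order $q$ by constants depending only on $K$, $\varepsilon$, and the sup-norms of the derivatives of $C$ over the compact region — crucially \emph{not} on $\mu$ or $\nu$. Differentiating the fixed-point equation once gives $f'(x)$ as a $\nu$-weighted average of $\partial_x C(x,y)$, so $\|f'\|_\infty \le \sup \|\partial_x C\|$; differentiating $k$ times produces a polynomial in lower-order derivatives of the potentials and of $C$, divided by powers of $\varepsilon$, yielding the uniform bound by induction on $k$.

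**The argument then closes** by setting $m$ to be the maximum of these derivative bounds (together with a bound on the values themselves, obtained by normalizing one potential and controlling the other through the fixed-point relation). Since at least one maximizing pair $(f^*,g^*)$ lies in $\C^q_m(K,\R)$ by this regularity estimate, and since $\C^q_m(K,\R) \subseteq \C(K,\R)$ makes the restricted supremum no larger than the full one, the two suprema must agree:
\begin{equation*}
\sup_{f,g \in \C^q_m(K,\R)} (\mu \otimes \nu)\big(h^{f,g}_{C,\varepsilon}\big) \;\le\; \sup_{f,g \in \C(K,\R)} (\mu \otimes \nu)\big(h^{f,g}_{C,\varepsilon}\big) \;=\; (\mu \otimes \nu)\big(h^{f^*,g^*}_{C,\varepsilon}\big) \;\le\; \sup_{f,g \in \C^q_m(K,\R)} (\mu \otimes \nu)\big(h^{f,g}_{C,\varepsilon}\big).
\end{equation*}

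**The main obstacle** is making the uniformity of $m$ in the measures fully rigorous. The delicate point is that the derivative bounds emerging from the fixed-point equations must be controlled independently of $\mu$ and $\nu$; this works precisely because differentiating a log-sum-exp of $C(\cdot,y)$ against a probability measure yields a convex-combination (expectation) of derivatives of $C$, whose magnitude is governed by $\sup_{x,y \in K}\|C^{(k)}(x,y)\|$ rather than by any feature of the measures. Since I am permitted to invoke \citep[Proposition 1]{genevay2019sample}, the cleanest route is to cite it for the derivative estimates on the optimal potentials and then merely verify that these estimates assemble into a single constant $m = m((K,d);(C,q);\varepsilon)$ and that $h^{f^*,g^*}_{C,\varepsilon}$ realizes the supremum, rather than re-deriving the potential regularity from scratch.
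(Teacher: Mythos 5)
Your proposal is correct and follows essentially the same route as the paper's proof: both exploit the Schr\"odinger fixed-point equations for the optimal potentials, observe that differentiating them expresses $\nabla f$ as an average of $\nabla_1 C$ under a probability weight (hence a Lipschitz bound independent of $\mu,\nu$), normalize one potential to control sup-norms, and invoke \citep[Proposition 1]{genevay2019sample} for the derivative bounds up to order $q$ before concluding that an optimal pair lies in $\C^q_m(K,\R)$. The only cosmetic difference is that the paper justifies the unit-mass property of the weight via the primal-dual relationship, while you read it off directly from the log-sum-exp structure; these are the same computation.
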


Naturally, the smoothness of $f$, $g$ and $C$ renders the global potential $h^{f, g}_{C,\varepsilon}$ smooth as well. Combining Lemma~\ref{lm:dual} with the following result ensures the smoothness of the optimal global potential under smooth data-processing transformations, such as diffeomorphic transformations.
\begin{proposition}[Smoothness of the optimal global potential]\label{prop:global}
Let $\X$ be a compact subset of $\R^d$, suppose that the ground cost function $C$ belongs to $\C^q(\R^d \times \R^d, \R^+)$ with $q \geq 1$, set $p \geq 1$ and write $\kappa := \min\{p,q\}$. Then for any $m>0$ and $R>0$, there exists a constant $H = H(m;R;(C,q);\varepsilon;p)>0$ such that for any $f,g \in \C^q_m(B_R,\R)$ and $T_1,T_2 \in \C^p_R(\X,\R^d)$,
$$
h^{f, g}_{C,\varepsilon} \circ (T_1,T_2) \in \C^{\kappa}_H(\X \times \X, \R).
$$
\end{proposition}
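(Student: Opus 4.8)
The plan is to treat $h^{f,g}_{C,\varepsilon}\circ(T_1,T_2)$ as a finite combination of compositions, bound the derivatives of each piece up to order $\kappa$ by constants depending only on the listed parameters, and assemble them. Write $(T_1,T_2)$ for the product map $(x,y)\mapsto(T_1(x),T_2(y))$ from $\X\times\X$ to $\R^d\times\R^d$. Since $\norm{T_i}_\infty\leq\norm{T_i}_{p,\infty}\leq R$, each $T_i$ sends $\X$ into $B_R$, so $f\circ T_1$, $g\circ T_2$ and $C\circ(T_1,T_2)$ are all well defined, the last one determined by the values of $C$ on the compact set $B_R\times B_R$. On that compact set the derivatives of $C$ up to order $q$ are continuous, hence bounded by a constant depending only on $(C,q)$ and $R$; call this bound $M_C$.

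First I would handle the non-exponential part. A composition of a $\C^q$ map with a $\C^p$ map is $\C^{\min\{p,q\}}=\C^\kappa$, and the higher-order chain rule (Fa\`a di Bruno for Frechet derivatives) expresses the $k$-th derivative of such a composition, for $k\leq\kappa$, as a sum of products of derivatives of the outer map (orders $\leq k\leq q$) evaluated along the inner map, paired with derivatives of the inner map (orders $\leq k\leq p$). Bounding operator norms by submultiplicativity, I obtain that $\norm{f\circ T_1}_{\kappa,\infty}$, $\norm{g\circ T_2}_{\kappa,\infty}$ and $\norm{C\circ(T_1,T_2)}_{\kappa,\infty}$ are each dominated by a polynomial in $m$, $M_C$ and $R$, hence by a constant depending only on $m$, $R$, $(C,q)$ and $p$. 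Consequently the exponent
\[
\psi:=\tfrac1\varepsilon\bigl(f\circ T_1+g\circ T_2-C\circ(T_1,T_2)\bigr)
\]
lies in $\C^\kappa(\X\times\X,\R)$ with $\norm{\psi}_{\kappa,\infty}$ bounded by such a constant, the factor $1/\varepsilon$ introducing the dependence on $\varepsilon$.

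The crux is the exponential term $e^\psi$, which a priori could be unbounded. Here I would use the nonnegativity of the ground cost together with the sup-norm control on the potentials: since $C\geq0$ and $\norm{f}_\infty,\norm{g}_\infty\leq m$, the exponent obeys the uniform upper bound
\[
\psi(x,y)\leq\frac{f(T_1(x))+g(T_2(y))}{\varepsilon}\leq\frac{2m}{\varepsilon}
\]
independently of $T_1,T_2$, so $0<e^\psi\leq e^{2m/\varepsilon}$ everywhere. As $u\mapsto e^u$ is smooth, $e^\psi$ is $\C^\kappa$, and Fa\`a di Bruno gives $(e^\psi)^{(k)}=e^\psi\,P_k(\psi^{(1)},\dots,\psi^{(k)})$ for $k\leq\kappa$, where $P_k$ is a universal polynomial whose monomials are products of derivatives of $\psi$ of total order $k$. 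Since the factor $e^\psi$ is bounded by $e^{2m/\varepsilon}$ and every $\norm{\psi^{(j)}}_{op}$ for $1\leq j\leq\kappa$ is bounded by the previous step, each $\norm{(e^\psi)^{(k)}}_{op}$ is bounded by a constant depending only on $m$, $R$, $(C,q)$, $\varepsilon$ and $p$.

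Finally I would combine the two contributions: from \eqref{eq:dual2}, $h^{f,g}_{C,\varepsilon}\circ(T_1,T_2)=f\circ T_1+g\circ T_2-\varepsilon\,e^\psi+\varepsilon$, so its $\C^\kappa$-norm is bounded by the sum of the bounds obtained above, yielding a constant $H=H(m;R;(C,q);\varepsilon;p)$ as claimed. The main obstacle is precisely the exponential: the Bell-polynomial factors are controlled by the chain-rule bounds, but $e^\psi$ itself is not, and it is the sign condition $C\geq0$ combined with the uniform bound $m$ on the potentials---rather than any control on $T_1,T_2$ beyond $\norm{T_i}_{p,\infty}\leq R$---that closes the argument. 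The remaining Fa\`a di Bruno bookkeeping in operator norm is routine given submultiplicativity.
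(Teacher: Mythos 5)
Your proof is correct, and it rests on the same two ingredients as the paper's: the operator-norm Fa\`a di Bruno bound (Proposition~\ref{prop:chain_bound}) and uniform bounds over the compact ball $B_R$. What differs is the order of the two chain-rule applications. The paper first bounds the global potential itself, showing $h^{f,g}_{C,\varepsilon}\in\C^q_{H_0}(B_R\times B_R,\R)$ uniformly over $f,g\in\C^q_m(B_R,\R)$ by applying Proposition~\ref{prop:chain_bound} to $\Gamma^{f,g}_{C,\varepsilon}=\exp\circ\big((f+g-C)/\varepsilon\big)$, and only afterwards composes with the product map, applying the chain rule once more with $F=h^{f,g}_{C,\varepsilon}$, $G=(T_1,T_2)$ and the bound $\norm{(T_1,T_2)^{(k)}}_{op}\leq\sqrt{2}R$. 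You instead push $T_1,T_2$ inside first, bound $f\circ T_1$, $g\circ T_2$ and $C\circ(T_1,T_2)$ in $\C^\kappa$-norm, and apply the exponential last, to the already-composed exponent $\psi$. Both orderings yield a constant with the stated dependence, so the choice is largely bookkeeping; the paper's ordering has the mild structural advantage of producing an intermediate bound $H_0$ on the global potential that makes no reference to the data-processing maps, while yours needs only one exponential bound rather than propagating it through a second composition. One correction to your closing remark, though: the sign condition $C\geq 0$ is \emph{not} what closes the argument. The paper bounds the same exponential by $\exp\big((2m+C_\infty(R))/\varepsilon\big)$ using only that $C$ is continuous, hence bounded, on the compact set $B_R\times B_R$ --- precisely the compactness you already invoke to define $M_C$. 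Nonnegativity merely sharpens the constant to $e^{2m/\varepsilon}$; what is essential is boundedness of the cost on the relevant compact set, which the hypotheses supply without any sign assumption.
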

We are now ready to state and prove our main results.

\section{Main results}\label{sec:main}

This section focuses on the main theoretical contributions of the paper, namely the existence and statistical consistency of the empirical optimal matching function between $\alpha$ and $\beta$ when using a Sinkhorn divergence.

Firstly, we show that the objective functions $J_\lambda$ and $J_{\lambda,n}$ with $\Lambda = S_{C,\varepsilon}$ admit minimizers. We recall that a function $\Psi : L^2_V \to \R$ is \emph{weakly continuous} if for any sequence $\{ v^n \}_{n \in \N}$ weakly converging to some $v \in L^2_V$ (see \eqref{eq:weak}), we have $\Psi(v^n) \xrightarrow[n \to +\infty]{} \Psi(v)$. \citep[Theorem 7]{glaunes2005transport} states that $J_\lambda$ admits a minimum if $v \in L^2_V \mapsto \Lambda({\phi^v_1}_\sharp \alpha,\beta)$ is weakly continuous and non negative while \citep[Theorem 1]{feydy2019interpolating} guarantees the non negativeness of Sinkhorn divergences when $e^{-\frac{C}{\varepsilon}}$ defines positive universal kernel. Therefore, existence of an optimal matching directly follows from the proposition below.
\begin{proposition}[Existence of the optimal vector fields]\label{prop:J_cont}
Let $\alpha$ and $\beta$ be two probability measures on $\X$ a compact subset of $\R^d$, suppose that the ground cost function $C$ belongs to $\C^1(\R^d \times \R^d, \R^+)$, and assume that Assumption~\ref{hyp:embedded} holds. Then the function $v \in L^2_V \mapsto S_{C,\varepsilon}({\phi^v_1}_\sharp \alpha,\beta)$ is weakly continuous. If additionally $e^{-\frac{C}{\varepsilon}}$ defines a positive universal kernel, then $J_\lambda$ for $\Lambda = S_{C,\varepsilon}$ admit minimizers.
\end{proposition}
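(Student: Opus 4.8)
The plan is to reduce the statement to the weak continuity of the two entropic transportation costs appearing in the Sinkhorn divergence, and then to establish that continuity through the dual formulation. Since $\beta$ is fixed, the term $\frac{1}{2}\T_{C,\varepsilon}(\beta,\beta)$ is constant, so it suffices to prove that the maps $v \mapsto \T_{C,\varepsilon}({\phi^v_1}_\sharp \alpha, \beta)$ and $v \mapsto \T_{C,\varepsilon}({\phi^v_1}_\sharp \alpha, {\phi^v_1}_\sharp \alpha)$ are weakly continuous. I would fix a sequence $\{v^n\}$ weakly converging to $v$ in $L^2_V$; since weakly convergent sequences are bounded (Banach--Steinhaus), there is $M>0$ with $v^n, v \in L^2_{V,M}$ for all $n$.

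First I would transfer the weak convergence of the velocity fields to a weak convergence of the deformed measures. By the continuity of the flow map with respect to the weak topology on bounded balls of $L^2_V$ (a standard consequence of the admissibility Assumption~\ref{hyp:embedded}, underlying the proof of \citep[Theorem 7]{glaunes2005transport}), weak convergence $v^n \rightharpoonup v$ forces $\phi^{v^n}_1 \to \phi^v_1$ uniformly on the compact set $\X$ supporting $\alpha$. By Lemma~\ref{lm:diffeo} applied with radius $M$ and $K = \X$, all the deformations map $\X$ into a common compact ball $B_R$. Hence for any $f \in \C(B_R,\R)$, the change-of-variables formula together with uniform convergence and dominated convergence yields $\int f \, \mathrm{d}({\phi^{v^n}_1}_\sharp \alpha) = \int f\circ \phi^{v^n}_1 \, \mathrm{d}\alpha \to \int f \circ \phi^v_1 \, \mathrm{d}\alpha = \int f \, \mathrm{d}({\phi^v_1}_\sharp \alpha)$; writing $\mu_n := {\phi^{v^n}_1}_\sharp \alpha$ and $\mu := {\phi^v_1}_\sharp \alpha$, this means $\mu_n \to \mu$ weakly, all measures being supported in $B_R$.

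Next I would prove that the entropic cost is continuous under weak convergence of measures supported in the fixed compact $B_R$, which is the crux of the argument. Here $C \in \C^1$, so Lemma~\ref{lm:dual} provides a constant $m>0$ for which $\T_{C,\varepsilon}$, restricted to measures on $B_R$, equals the supremum of $(\cdot \otimes \cdot)(h^{f,g}_{C,\varepsilon})$ over the smooth class $f,g \in \C^1_m(B_R,\R)$. A direct estimate shows that on $B_R \times B_R$ this family of global potentials is uniformly bounded and equi-Lipschitz: $f,g$ are bounded by $m$ with gradients bounded by $m$, the exponent $(f(x)+g(y)-C(x,y))/\varepsilon$ is bounded above by $2m/\varepsilon$ since $C \geq 0$, and $C$ is Lipschitz on the compact $B_R \times B_R$. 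Consequently $\{h^{f,g}_{C,\varepsilon}\}$ is relatively compact in $\C(B_R\times B_R,\R)$ by the Arzel\`a--Ascoli theorem, and weak convergence of measures over a uniformly bounded equicontinuous family is uniform. Applying this to the product measures $\mu_n \otimes \beta \to \mu \otimes \beta$ and $\mu_n \otimes \mu_n \to \mu \otimes \mu$ gives $\sup_{f,g}|(\mu_n\otimes\beta)(h^{f,g}_{C,\varepsilon}) - (\mu\otimes\beta)(h^{f,g}_{C,\varepsilon})| \to 0$ and likewise for the symmetric term, whence the two suprema converge. Therefore $\T_{C,\varepsilon}(\mu_n,\beta) \to \T_{C,\varepsilon}(\mu,\beta)$ and $\T_{C,\varepsilon}(\mu_n,\mu_n)\to\T_{C,\varepsilon}(\mu,\mu)$, which together with the constant term establishes the weak continuity of $v\mapsto S_{C,\varepsilon}({\phi^v_1}_\sharp\alpha,\beta)$.

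Finally, for the existence statement I would combine this weak continuity with the non-negativity of $S_{C,\varepsilon}$ guaranteed by \citep[Theorem 1]{feydy2019interpolating} under the universal-kernel hypothesis, and invoke \citep[Theorem 7]{glaunes2005transport}, which asserts that $J_\lambda$ attains its minimum whenever the data-fidelity term is weakly continuous and non-negative (the weak lower semicontinuity of the regularizer $\lambda\norm{v}^2_{L^2_V}$ being handled there). The main obstacle is precisely the continuity of the entropic cost under weak convergence: a supremum of weakly continuous linear functionals is only lower semicontinuous in general, so the argument genuinely requires the reduction to the equicontinuous potential class of Lemma~\ref{lm:dual} to upgrade lower semicontinuity to full continuity; the flow-continuity input is classical but must be paired with Lemma~\ref{lm:diffeo} to secure a common compact support.
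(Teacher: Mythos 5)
Your proposal is correct, and its overall skeleton matches the paper's: bound the weakly convergent sequence via Banach--Steinhaus, use Lemma~\ref{lm:diffeo} to trap all push-forward measures in a common ball $B_R$, pass from convergence of the flows to weak* convergence of ${\phi^{v^n}_1}_\sharp \alpha$, and finish with the non-negativity of $S_{C,\varepsilon}$ \citep[Theorem 1]{feydy2019interpolating} fed into \citep[Theorem 7]{glaunes2005transport}. The genuine divergence is at the crux: where the paper simply cites \citep[Proposition 13]{feydy2019interpolating} for the weak* continuity of $\T_{C,\varepsilon}$ (hence of $S_{C,\varepsilon}$) in its input measures, you prove that continuity from scratch -- invoking Lemma~\ref{lm:dual} to restrict the dual problem to the class $\C^1_m(B_R,\R)$ with a constant $m$ independent of the measures, checking that the resulting global potentials $h^{f,g}_{C,\varepsilon}$ are uniformly bounded and equi-Lipschitz on $B_R \times B_R$, and upgrading weak convergence of $\mu_n \otimes \beta$ and $\mu_n \otimes \mu_n$ to uniform convergence over this equicontinuous family via Arzel\`a--Ascoli. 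Your observation that this reduction is what turns mere lower semicontinuity of a supremum of linear functionals into full continuity is exactly the right justification for why the smooth-potential class is needed. What each approach buys: the paper's citation is shorter and black-boxes the analytic work; yours is self-contained and reuses the dual-formulation machinery (Lemma~\ref{lm:dual}, compact-support control) that the paper develops anyway for Proposition~\ref{prop:key_prop}, making the two proofs structurally parallel. Two minor points: you claim uniform convergence $\phi^{v^n}_1 \to \phi^v_1$ on compacts, whereas the paper gets by with the weaker pointwise convergence of \citep[Proposition 4]{glaunes2005transport} plus dominated convergence (your stronger claim is true in the LDDMM literature, but the attribution should be made precise); and your argument tacitly uses that weak convergence of marginals implies weak convergence of the product measures $\mu_n \otimes \mu_n \to \mu \otimes \mu$, which is standard but worth stating.
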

The minimizer is not unique in general due to the non convexity of the data-fidelity loss with respect to $v$. Uniqueness could be artificially achieved by choosing $\lambda$ very large, thereby rendering the objective function strictly convex, but this would make the purpose of the regularization meaningless.

We now turn to our main theorem, which is divided in two items. The first one ensures the convergences of the empirical solutions to their population counterparts; the second one specifies the speed of this convergence. 
\begin{theorem}[Consistency of the optimal vector fields]\label{thm:main}
Let $\alpha_n$ and $\beta_n$ be empirical measures corresponding respectively to $\alpha$ and $\beta$, two probability measures on $\X$ a compact subset of $\R^d$, suppose that the ground cost function $C$ belongs to $\C^q(\R^d \times \R^d, \R^+)$ with $q \geq 1$ and induces a positive universal kernel $e^{-\frac{C}{\varepsilon}}$. Finally, assume that Assumption~\ref{hyp:embedded} holds. If, for any $n \in \N^*$, $v^n$ denotes a minimizer of $J_{\lambda,n}$ for $\Lambda = S_{C,\varepsilon}$,
then the following results hold.
\begin{itemize}
    \item[(i)] There exists a minimizer of $J_\lambda$ denoted by $v^*$ such that up to the extraction of a subsequence
\begin{equation*}
    \norm{v^{n} - v^*}_{L^2_V}  \xrightarrow[n \to \infty]{a.s.} 0 \ \ \text{and}\ \ \sup_{t \in [0,1]} \left\{\norm{\phi^{v^{n}}_t - \phi^{v^*}_t}_\infty + \norm{(\phi^{v^{n}}_t)^{-1} - (\phi^{v^*}_t)^{-1}}_\infty \right\} \xrightarrow[n \to \infty]{a.s.} 0.
\end{equation*}
    \item[(ii)] If $\kappa := \min \{p,q\} > d$, then there exists a constant $A = A(\lambda;(\X,d);(C,q);\varepsilon;(V,p)) > 0$ such that
\begin{equation*}
    \E \left[ \abs{J_{\lambda}(v^n)-J_\lambda(v^*)} \right] \leq \frac{A}{\sqrt{n}}.
\end{equation*}
\end{itemize}
\end{theorem}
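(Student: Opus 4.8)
The plan is to treat this as a regularized $M$-estimation problem and to reduce both claims to a single uniform control of the empirical data-fidelity loss over a fixed ball of $L^2_V$. Write $L(v) := S_{C,\varepsilon}({\phi^v_1}_\sharp \alpha,\beta)$ and $L_n(v) := S_{C,\varepsilon}({\phi^v_1}_\sharp \alpha_n,\beta_n)$, so that $J_\lambda = L + \lambda\norm{\cdot}^2_{L^2_V}$ and $J_{\lambda,n} = L_n + \lambda\norm{\cdot}^2_{L^2_V}$. The first observation is that minimizers are uniformly bounded: since Sinkhorn divergences are non-negative and, for measures supported in the fixed compact $\X$, bounded by a constant depending only on $\norm{C}_\infty$ over $\X\times\X$ (take $\pi = \alpha\otimes\beta$ in \eqref{eq:primal}), we get $\lambda\norm{v^n}^2_{L^2_V} \leq J_{\lambda,n}(v^n) \leq J_{\lambda,n}(0) = S_{C,\varepsilon}(\alpha_n,\beta_n) \leq c$, hence $\norm{v^n}_{L^2_V} \leq M$ for a deterministic $M$; the same bound holds for any minimizer $v^*$ of $J_\lambda$. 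Thus all relevant vector fields live in the ball $L^2_{V,M}$, which is weakly sequentially compact.

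The crux is to show $\sup_{v\in L^2_{V,M}}\abs{L_n(v)-L(v)} \to 0$ (almost surely for (i), with rate $n^{-1/2}$ in expectation for (ii)), and the key reduction passes through the dual formulation. Expanding $S_{C,\varepsilon}$ into its three $\T_{C,\varepsilon}$ terms and using that a difference of suprema is bounded by the supremum of the differences, each term is controlled by an expression of the form $\sup_{f,g}\abs{(\gamma_n-\gamma)(h^{f,g}_{C,\varepsilon}\circ (T_1,T_2))}$, where $\gamma_n$ is a product empirical measure, $\gamma$ its population counterpart, and $(T_1,T_2)\in\{(\phi^v_1,\mathrm{id}),(\phi^v_1,\phi^v_1),(\mathrm{id},\mathrm{id})\}$; here I use the change of variables $({\phi^v_1}_\sharp\mu)(h) = \mu(h\circ\phi^v_1)$ to move the push-forward onto the integrand. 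By Lemma~\ref{lm:diffeo}, for $v\in L^2_{V,M}$ the maps $\phi^v_1$ lie in $\C^p_R(\X,\R^d)$ and push all supports into $B_R$, for a radius $R$ uniform in $v$; by Lemma~\ref{lm:dual} the potentials may be taken in $\C^q_m(B_R,\R)$; and Proposition~\ref{prop:global} then places every composed global potential in a single ball $\C^\kappa_H(\X\times\X,\R)$ with $H$ independent of $n$ and of $v$. Making this index class simultaneously independent of the data-dependent optimal potentials and uniform over the infinite-dimensional index $v$ is the main obstacle, and is exactly what this combination of lemmas resolves, collapsing everything into a fixed smooth class $\mathcal{H}\subset\C^\kappa_H(\X\times\X,\R)$. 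The fluctuation is then handled with the empirical-process tools of Appendix~\ref{sec:background}, after splitting $\gamma_n-\gamma$ into one-sample marginal processes (for the cross term, via $(\alpha_n-\alpha)\otimes\beta+\alpha\otimes(\beta_n-\beta)$, the degenerate remainder $(\alpha_n-\alpha)\otimes(\beta_n-\beta)$ being of smaller order).

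For (i), since a ball of $\C^\kappa(\X\times\X,\R)$ is relatively compact in sup-norm by Arzel\`a--Ascoli, $\mathcal{H}$ is a Glivenko--Cantelli class, which yields $\sup_{v}\abs{L_n(v)-L(v)} \to 0$ almost surely. Extract a subsequence $v^{n_k}\rightharpoonup v^*\in L^2_{V,M}$. Weak continuity of the data term (Proposition~\ref{prop:J_cont}) gives $L(v^{n_k})\to L(v^*)$, and combined with the uniform convergence, $L_{n_k}(v^{n_k})\to L(v^*)$. Weak lower semicontinuity of $\norm{\cdot}_{L^2_V}$ then yields $J_\lambda(v^*)\leq\liminf J_{\lambda,n_k}(v^{n_k})$, while optimality of $v^{n_k}$ together with $J_{\lambda,n_k}(v^*)\to J_\lambda(v^*)$ gives $\limsup J_{\lambda,n_k}(v^{n_k})\leq J_\lambda(v^*)$; comparing with an arbitrary fixed competitor shows $v^*$ minimizes $J_\lambda$. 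These inequalities force $\lambda\norm{v^{n_k}}^2_{L^2_V}\to\lambda\norm{v^*}^2_{L^2_V}$, so norm convergence together with weak convergence upgrades to strong convergence $\norm{v^{n_k}-v^*}_{L^2_V}\to 0$ in the Hilbert space $L^2_V$. The uniform convergence of the flows and of their inverses then follows from the continuous dependence of the solution of \eqref{eq:flow} on the driving field (Gr\"onwall estimates, as in \citep{younes2010shapes}).

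For (ii), I use the standard excess-risk decomposition. Since $v^n$ minimizes $J_{\lambda,n}$ and the regularizers cancel,
\begin{equation*}
0 \leq J_\lambda(v^n) - J_\lambda(v^*) \leq [L(v^n)-L_n(v^n)] + [L_n(v^*)-L(v^*)] \leq 2\sup_{v\in L^2_{V,M}}\abs{L_n(v)-L(v)},
\end{equation*}
so it remains to bound $\E[\sup_v\abs{L_n(v)-L(v)}]$. By the reduction above this is governed by $\E\sup_{h\in\mathcal{H}}\abs{(\gamma_n-\gamma)(h)}$, which a maximal inequality bounds by the bracketing-entropy integral of $\C^\kappa_H(\X\times\X,\R)$ divided by $\sqrt{n}$. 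Since this class of $\kappa$-smooth functions on the $2d$-dimensional domain $\X\times\X$ has bracketing entropy of order $u^{-2d/\kappa}$, the entropy integral $\int_0^1 u^{-d/\kappa}\,\mathrm{d}u$ is finite precisely when $\kappa > d$, which is exactly the stated hypothesis and is where that condition is needed; this delivers the $A/\sqrt{n}$ bound. The only remaining care is that $\gamma_n$ is a product (U-statistic-type) empirical measure rather than an i.i.d.\ one, which is precisely what the marginal splitting above addresses.
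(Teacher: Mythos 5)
Your proposal is correct and follows the same architecture as the paper's proof: uniform boundedness of all minimizers in a ball $L^2_{V,M}$, reduction of both items to the uniform deviation $\sup_{v \in L^2_{V,M}} \abs{J_{\lambda,n}(v)-J_\lambda(v)}$, and control of that deviation through the dual formulation (Lemma~\ref{lm:dual}), the smoothness of the flows (Lemma~\ref{lm:diffeo}) and of the composed global potentials (Proposition~\ref{prop:global}), followed by bracketing-entropy bounds on $\C^{\kappa}_H(\X \times \X,\R)$ — this is exactly the content of the paper's Proposition~\ref{prop:key_prop}, and your entropy computation on the $2d$-dimensional product domain reproduces the condition $\kappa > d$. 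Three local deviations are worth recording, each in your favor. First, you bound the minimizers by plugging the independent coupling $\pi = \alpha \otimes \beta$ into the primal \eqref{eq:primal}, whereas the paper's Lemma~\ref{lm:bounded} goes through the dual potentials; your route is more elementary and yields the same deterministic radius $M$. Second, for item $(i)$ the paper invokes ``weak continuity of $J_\lambda$'' and then asserts $\norm{u^n - u}_{L^2_V} \to 0$, which is imprecise since the penalty $\lambda \norm{\cdot}^2_{L^2_V}$ is only weakly lower semicontinuous; your liminf/limsup argument — weak lower semicontinuity plus optimality forces $J_{\lambda,n_k}(v^{n_k}) \to J_\lambda(v^*)$, hence $\norm{v^{n_k}}_{L^2_V} \to \norm{v^*}_{L^2_V}$, which combined with weak convergence upgrades to strong convergence in the Hilbert space — is the correct way to obtain the norm convergence claimed in the theorem. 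Third, you explicitly flag that $\alpha_n \otimes \beta_n$ is not an i.i.d.\ empirical measure of $\alpha \otimes \beta$ and repair this by splitting into marginal processes, a point the paper passes over when it applies Proposition~\ref{prop:upper} directly to the product. One minor quibble: you need not argue that the remainder $(\alpha_n - \alpha) \otimes (\beta_n - \beta)$ is of smaller order — conditioning on one sample bounds it at the rate $n^{-1/2}$, which suffices; and note that if you ran the entropy bound on the $d$-dimensional marginals produced by your own splitting, rather than on $\X \times \X$, you would even get the rate under the weaker requirement $\kappa > d/2$.
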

Note that \cite{glaunes2004diffeomorphic} proved a similar consistency result when the data-fidelity loss is the square of an MMD, but did not determine the speed of convergence as in $(ii)$. The demonstration of $(i)$ follows the steps of their proof (see \citep[Theorem 16]{glaunes2005transport}). The idea is to show the convergence of $\sup_{v \in L^2_{V,M}} \abs{J_{\lambda,n}(v)-J_\lambda(v)}$ as $n$ increases to infinity, where $L^2_{V,M}$ contains all the minimizers independently of $n$. The main challenge when addressing an entropic optimal transport cost comes from the fact that it does not satisfy a triangle inequality, nor a data-processing inequality, and is hence harder to control. We remedy to this issue by proving and applying the following intermediary result:
\begin{proposition}[Uniform consistency of entropic optimal transport up to smooth data-processing transformations]\label{prop:key_prop}
Let $\alpha_n$ and $\beta_n$ be empirical measures corresponding respectively to $\alpha$ and $\beta$, two probability measures on $\X$ a compact subset of $\R^d$, and suppose that the ground cost function $C$ belongs to $\C^q(\R^d \times \R^d, \R^+)$ with $q \geq 1$. Set $p \geq 1$ and write $\kappa := \min\{p,q\}$. Then, the following results hold:
\begin{itemize}
    \item[(i)] For any $R>0$
    \begin{equation*} 
    \sup_{T_1,T_2 \in \C^p_R(\X,\R^d)} |\T_{C,\varepsilon}({T_1}_\sharp \alpha_n, {T_2}_\sharp \beta_n) - \T_{C,\varepsilon}({T_1}_\sharp \alpha, {T_2}_\sharp \beta)| \xrightarrow[n \to +\infty]{a.s.} 0.
\end{equation*}
    \item[(ii)] If $\kappa > d$, then for any $R>0$ there exists a constant $A = A(R;(C,q);\varepsilon;(\X,d);p)>0$ such that
\begin{equation*}
    \E \left[ \sup_{T_1,T_2 \in \C^p_R(\X,\R^d)} |\T_{C,\varepsilon}({T_1}_\sharp \alpha_n,{T_2}_\sharp \beta_n)-\T_{C,\varepsilon}({T_1}_\sharp \alpha,{T_2}_\sharp \beta)| \right] \leq \frac{A}{\sqrt{n}}.
\end{equation*}
\end{itemize}
\end{proposition}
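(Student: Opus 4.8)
The plan is to reduce both statements to classical empirical-process bounds over a single fixed smooth function class, exploiting the dual formulation of entropic optimal transport. First I would dualize: by the change-of-variables formula for push-forwards together with Lemma~\ref{lm:dual} applied on the compact ball $B_R$ (which contains each image $T_i(\X)$ since $\norm{T_i}_\infty \leq R$, hence contains the supports of the push-forwards), for any probability measures $\mu,\nu$ on $\X$ one has
\[
\T_{C,\varepsilon}({T_1}_\sharp \mu, {T_2}_\sharp \nu) = \sup_{f,g \in \C^q_m(B_R,\R)} (\mu \otimes \nu)\left( h^{f,g}_{C,\varepsilon} \circ (T_1,T_2) \right),
\]
with $m$ depending only on $(B_R,d)$, $(C,q)$ and $\varepsilon$, hence uniform in $\mu,\nu,T_1,T_2$. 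Using $\abs{\sup_a F(a) - \sup_a G(a)} \leq \sup_a \abs{F(a)-G(a)}$ and then taking the supremum over $T_1,T_2 \in \C^p_R(\X,\R^d)$, the quantity to control is at most $\sup_{T_1,T_2,f,g} \abs{(\alpha_n \otimes \beta_n - \alpha \otimes \beta)( h^{f,g}_{C,\varepsilon} \circ (T_1,T_2) )}$. By Proposition~\ref{prop:global}, every composed global potential lies in the single class $\C^\kappa_H(\X \times \X, \R)$ for a constant $H = H(m;R;(C,q);\varepsilon;p)$, so the bound becomes $\sup_{\Phi \in \C^\kappa_H(\X \times \X,\R)} \abs{(\alpha_n \otimes \beta_n - \alpha \otimes \beta)(\Phi)}$.

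The second step handles the fact that $\alpha_n \otimes \beta_n$ is a product of two independent empirical measures rather than the empirical measure of $\alpha \otimes \beta$. I would split
\[
\alpha_n \otimes \beta_n - \alpha \otimes \beta = (\alpha_n - \alpha) \otimes \beta_n + \alpha \otimes (\beta_n - \beta),
\]
and integrate out one variable in each term. For $\Phi \in \C^\kappa_H(\X \times \X,\R)$, the partial integrals $x \mapsto \int \Phi(x,y)\,\mathrm{d}\beta_n(y)$ and $y \mapsto \int \Phi(x,y)\,\mathrm{d}\alpha(x)$ again lie in $\C^\kappa_H(\X,\R)$, since integration commutes with the $x$- (resp. $y$-) derivatives and cannot increase the relevant operator norms. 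Hence
\[
\sup_{\Phi \in \C^\kappa_H(\X \times \X,\R)} \abs{(\alpha_n \otimes \beta_n - \alpha \otimes \beta)(\Phi)} \leq \sup_{\psi \in \C^\kappa_H(\X,\R)} \abs{(\alpha_n - \alpha)(\psi)} + \sup_{\psi \in \C^\kappa_H(\X,\R)} \abs{(\beta_n - \beta)(\psi)},
\]
reducing everything to two one-sample empirical processes indexed by the smooth class $\C^\kappa_H(\X,\R)$ on the compact set $\X$.

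The final step invokes empirical process theory for this class (recalled in Appendix~\ref{sec:background}). For $(i)$, since $\X$ is compact and $\kappa \geq 1$, the class $\C^\kappa_H(\X,\R)$ is uniformly bounded and, by Arzel\`a--Ascoli, totally bounded in $\norm{\cdot}_\infty$; it is therefore a Glivenko--Cantelli class, which yields the almost-sure convergence of each supremum to zero. For $(ii)$, I would use the metric-entropy estimate $\log N(\delta, \C^\kappa_H(\X,\R), \norm{\cdot}_\infty) \leq c\,\delta^{-d/\kappa}$ for smooth functions on a compact subset of $\R^d$. When $\kappa > d$ we have $d/\kappa < 1$, so Dudley's entropy integral converges and the standard maximal inequality gives $\E \sup_{\psi} \abs{(\alpha_n - \alpha)(\psi)} \leq A'/\sqrt{n}$, with an analogous bound for $\beta_n$; summing the two and tracking the dependence of the constants through $m$, $H$ and the entropy estimate produces the claimed $A/\sqrt{n}$.

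The main obstacle is the first paragraph: passing from a supremum of entropic costs over the deformation pair $(T_1,T_2)$ to a supremum of a \emph{fixed} empirical process over a \emph{single fixed} smooth class. This hinges crucially on the uniformity of the constants — that $m$ in Lemma~\ref{lm:dual} depends only on the ambient ball $B_R$ and not on the particular push-forward measures, and that $H$ in Proposition~\ref{prop:global} is uniform over all admissible potentials and deformations. Once this uniform reduction is secured, the product-measure decomposition and the empirical-process bounds are comparatively routine; the constraint $\kappa > d$ enters solely to make the entropy integral finite, which is precisely where the dimension-free parametric rate would fail for rougher classes.
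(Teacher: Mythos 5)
Your proof is correct, and its first half (dualization on $B_R$ via Lemma~\ref{lm:dual} with a constant $m$ uniform in the push-forward measures, the bound $\abs{\sup F - \sup G} \leq \sup \abs{F-G}$, and the reduction to the single class $\C^{\kappa}_H(\X \times \X,\R)$ via Proposition~\ref{prop:global}) is exactly the paper's argument. Where you genuinely diverge is in how the resulting empirical process is handled. The paper keeps the two-sample object $\sup_{h \in \C^{\kappa}_H(\X\times\X,\R)} \abs{(\alpha_n \otimes \beta_n)(h) - (\alpha\otimes\beta)(h)}$ intact and applies one-sample tools — the Glivenko--Cantelli theorem of \citep{van1996weak} and its own Proposition~\ref{prop:upper} — directly to the pair $(\alpha_n \otimes \beta_n,\ \alpha \otimes \beta)$ on the $2d$-dimensional domain $\X \times \X$; this is where the condition $\kappa > d$ originates ($\kappa > (2d)/2$ in Proposition~\ref{prop:upper}), but it quietly treats $\alpha_n \otimes \beta_n$ as if it were the empirical measure of i.i.d.\ draws from $\alpha \otimes \beta$, which it is not (it is the product of two independent empirical measures, with dependent atoms $(x_i,y_j)$). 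Your decomposition $\alpha_n \otimes \beta_n - \alpha \otimes \beta = (\alpha_n - \alpha) \otimes \beta_n + \alpha \otimes (\beta_n - \beta)$, followed by integrating out one variable (noting that partial integration against any probability measure maps $\C^{\kappa}_H(\X\times\X,\R)$ into $\C^{\kappa}_H(\X,\R)$), reduces everything to two \emph{bona fide} one-sample empirical processes on the $d$-dimensional set $\X$, so it both repairs this gap rigorously and, as a bonus, only requires $\kappa > d/2$ for the Dudley integral to converge — a sharper threshold than the stated $\kappa > d$, though you do not remark on this. One small imprecision: you assert that Dudley's integral converges because $d/\kappa < 1$, but the integrand is $\sqrt{\log N(\epsilon)} \lesssim \epsilon^{-d/(2\kappa)}$, so the exact convergence condition is $d/(2\kappa) < 1$; under the hypothesis $\kappa > d$ this is satisfied a fortiori, so your conclusion stands, merely with a conservative citation of the threshold.
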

Notice that as a direct consequence of the triangle inequality, a similar result holds for $S_{C,\varepsilon}$. Hence, as diffeomorphisms are smooth on compact sets according to Lemma~\ref{lm:diffeo}, we can apply Proposition~\ref{prop:key_prop} to control $\sup_{v \in L^2_{V,M}} \abs{J_{\lambda,n}(v)-J_\lambda(v)}$.

Although Proposition~\ref{prop:key_prop} is motivated by diffeomorphic registration, we believe it has further interest. Remark in particular that the objective \eqref{eq:problem} shares similarities with generative modelling \citep{goodfellow2014generative}; an input distribution $\alpha$ is passed through a parametric function $\phi^v_1$ meant to generate a target distribution $\beta$ by minimizing a certain loss $\Lambda$. In particular, generative modelling using the Wasserstein-1 distance or a Sinkhorn divergence has proved to be efficient for diverse applications \citep{arjovsky2017wasserstein,genevay2018learning}. The main difference in \eqref{eq:problem} comes from the parameter $v$ being infinitely dimensional, and characterizing a diffeomorphism instead of a neural network. However, Proposition~\ref{prop:key_prop} is general enough to be applied in the context of generative modelling with Sinkhorn divergences, in order to derive statistical guarantees for smooth generators.

\begin{remark}
Proposition~\ref{prop:J_cont} and Theorem~\ref{thm:main} do not hold for $\T_{C,\varepsilon}$ instead of $S_{C,\varepsilon}$ because $v \mapsto \T_{C,\varepsilon}({\phi^v_1}_\sharp \alpha, \beta)$ is not lower bounded on $L^2_V$. We also emphasize that it is preferable to use a Sinkhorn divergence in practice, since it does not suffer from the aforementioned entropic bias. In particular, the experiments from the next section illustrate that debiasing leads to more accurate registrations.
\end{remark}

\begin{remark}
Item $(ii)$ in Proposition~\ref{prop:key_prop} resembles classical sampling complexity bounds of entropic optimal transport such as \citep[Theorem 3]{genevay2019sample}, \citep[Theorem 7]{sejourne2019sinkhorn} and \citep[Corollary 1]{mena2019statistical}. Our result differs critically by handling a supremum over a class of smooth push-forward maps within the expectation, which enables to prove item $(ii)$ in Theorem~\ref{thm:main}.
\end{remark}

\section{Implementation}\label{sec:implementation}
This section addresses the practical aspects of diffeomorphic registration through Sinkhorn divergence. Firstly, we briefly recall how to compute a minimizer of $J_{\lambda,n}$ for an arbitrary loss $\Lambda$. Then, we illustrate the procedure for Sinkhorn divergences on numerical experiments.

\subsection{Resolution procedure} This subsection introduces the basic knowledge for solving a diffeomorphic registration problem. It is meant to keep the paper as self-contained as possible. Several minimization strategies coexist, corresponding to different parametrizations of the optimization problem \ref{eq:emp_problem}. We refer to \citep[Section 10.6]{younes2010shapes} for a complete overview of the resolution procedures. 

\subsubsection{Gradient descent over the time-dependent momentum}\label{sec:time_dependent}

To practically minimize $J_{\lambda,n}$, one must first write the optimal vector fields $v$ in a finite parametric form, and then perform a gradient descent on the coefficients of this decomposition. Recall that Assumption~\ref{hyp:embedded} implies that $V$ is a RKHS, thereby characterized by a unique matrix-valued symmetric positive kernel function $\operatorname{Ker} : \R^d \times \R^d \to \R^{d \times d}$. For simplicity, we address the case of the Gaussian kernel defined in \ref{eq:ker}. Statistically, the bandwidth parameter $\sigma$ represents the correlation between the morphed points; physically, it quantifies the fluid viscosity. When $\sigma$ is small, the points have independent trajectories; when it is large, the points move as a whole.

The RKHS viewpoint enables to parametrize the optimal vector fields through a kernel trick. Firstly, note that the minimization of $J_{\lambda,n}$ can be formulated as an optimal control problem. It amounts to solving
\begin{equation}\label{eq:energy}
\min_{v \in L^2_V} \Lambda(\alpha_n(1),\beta_n) + \lambda \norm{v}^2_{L^2_V}; \text{ subject to } \alpha_n(t) = {\phi^v_t}_\sharp \alpha_n \text{ for any } t \in [0,1].
\end{equation}
Then, since the constraint involves a finite number $n$ of trajectories, the so-called \emph{reduction principle} (see \citep[Theorem 14]{glaunes2005transport}) entails that any solution to problem \ref{eq:energy}, that is any minimizer of $J_{\lambda,n}$, can be written as,
\[
    v^n_t(x) = \sum^n_{i=1} \operatorname{Ker} \left(x,z^{a}_i(t)\right)a_i(t),
\]
where the \emph{momentum} $a := (a_1,\ldots,a_n)$ denotes $n$ unspecified time functions of $L^2([0,1],\R^d)$, and the \emph{control trajectories} $z^{a} := (z^{a}_1,\ldots,z^{a}_n)$ are defined by
\begin{equation}\label{eq:z}
    z^{a}_i(t) = x_i + \int^t_0 \sum^n_{j=1} \operatorname{Ker}(z^{a}_i(s),z^{a}_j(s)) a_j(s) \mathrm{d}s.
\end{equation}
This enables to recast \eqref{eq:energy} as minimizing,
\begin{equation}\label{eq:time_dependent}
     E_{\lambda,n}(a) := \Lambda\left(\frac{1}{n} \sum^n_{k=1} \delta_{z^{a}_k(1)},\beta \right) + \lambda \int^1_0 \sum^n_{i,j=1} a_i(t) \cdot \operatorname{Ker}\left(z^{a}_i(t),z^{a}_j(t)\right) a_j(t) \mathrm{d}t,
\end{equation}
where $\cdot$ denotes the Euclidean inner product. The gradient of $E_{\lambda,n}$ was originally derived in \citep{glaunes2004diffeomorphic} for the MMD case, and re-expressed in \citep{glaunes2005transport, younes2020diffeomorphic} for more general settings. It can be written as $\nabla E_{\lambda,n}(a) = 2 \lambda a - p^{a}$ where $p^{a} := (p^{a}_1,\ldots,p^{a}_n)$ denotes $n$ functions of $L^2([0,1],\R^d)$ satisfying for any $i \in \{1,\ldots,n\}$ and $t \in [0,1]$,
\begin{multline}\label{eq:p}
    p^{a}_i(t) := \nabla_{z^{a}_i(1)} \Lambda\left(\frac{1}{n} \sum^n_{k=1} \delta_{z^{a}_k(1)},\beta \right)\\ -\frac{1}{\sigma^2} \int^1_t \sum^n_{j=1} \operatorname{Ker}\left(z^{a}_i(t),z^{a}_j(t)\right)\left[a_i(t) \cdot p^{a}_j(t) + a_j(t) \cdot p^{a}_i(t) - 2 \lambda a_i(t) \cdot a_j(t)\right]\left(z^{a}_i(t)-z^{a}_j(t)\right).
\end{multline}
In order to practically track all the functions of the continuous time variable, one must discretize the time scale $[0,1]$ into $\tau$ sub-intervals of equal sizes, which recasts $a$, $z^a$ and $p^a$ as $(\tau+1) \times n \times d$ tensors. Then, equations \eqref{eq:z} and \eqref{eq:p} are successively solved at each iteration of the gradient descent by solving the associated discrete dynamical systems. By plugging the solutions $z^a$ and $p^a$ into the formula of $\nabla E_{\lambda,n}(a)$ one can update the variable $a$ with $a \xleftarrow{} a - \xi \times (2 \lambda a - p^{a})$ where $\xi$ denotes the step size. The computational complexity of an iteration is in $O(n^2 d \tau)$. However, the dynamical systems can be parallelized in the number of points and the dimension. At the end of the process, we obtain the following deformation,
\begin{equation}\label{eq:practical_solution}
    \phi^{a,\tau}_t(x) := x + \frac{1}{\tau} \sum^{t-1}_{s=0} \sum^n_{j=1} \operatorname{Ker}(x,z^{a}_j(s)) a_j(s).
\end{equation}
This approach handles any data-fidelity loss $\Lambda$ as long as it is differentiable with respect to the data points of the discrete distributions. Both Sinkhorn divergences and squares of MMDs satisfy this property.

\subsubsection{Geodesic shooting of the initial momentum}

A widely used variant of the above approach is the \emph{geodesic shooting of the initial momentum} which relies on the equations satisfied at the minimum to uniquely constrain the time-dependent solution $a(\cdot)$ by its initial value, allowing for optimizing solely over $a(0)$.

More specifically, as demonstrated in \citep{miller2006geodesic}, the Hamiltonian viewpoint of the control problem yields the following joint dynamic of the optimal control trajectories and momentum:
\begin{align}
    z^a_i(t) &= x_i + \int^t_0 \sum^n_{j=1} \operatorname{Ker}\left(z^a_i(s),z^a_j(s)\right) a_j(s) \mathrm{d}s,\nonumber\\
    a_i(t) &= a(0) - \frac{1}{2} \nabla_{z^a_i(t)} \int^t_0 \left( \sum^n_{j=1} a_i(s) \cdot \operatorname{Ker}\left(z^{a}_i(s),z^{a}_j(s)\right) a_j(s) \right) \mathrm{d}s. \label{eq:a}
\end{align}
This entails that both the control trajectories and the momentum at any instant $t$ are fully characterized by $a(0)$. Slightly abusing notations we write $z^a = z^{a(0)}$.

Additionally, the kinetic energy remains constant along optimal solutions, implying that
\begin{equation}\label{eq:conservation}
    \int^1_0 \sum^n_{i,j=1} a_i(t) \cdot \operatorname{Ker}\left(z^{a(0)}_i(t),z^{a(0)}_j(t)\right) a_j(t) \mathrm{d}t = \sum^n_{i,j=1} a_i(0) \cdot \operatorname{Ker}\left(x_i,x_j\right) a_j(0).
\end{equation}
Therefore, \eqref{eq:conservation} together with \eqref{eq:a} enable to recast the functional \eqref{eq:time_dependent} to minimize as
\begin{equation}\label{eq:shooting}
     E^0_{\lambda,n}(a(0)) := \Lambda\left(\frac{1}{n} \sum^n_{k=1} \delta_{z^{a(0)}_k(1)},\beta \right) + \lambda \sum^n_{i,j=1} a_i(0) \cdot \operatorname{Ker}\left(x_i,x_j\right) a_j(0),
\end{equation}
which is a well-defined function of the time-invariant parameter $a(0) \in \R^{n \times d}$ only. After minimizing \eqref{eq:shooting} using a gradient-descent-based method, one can \emph{shoot} the obtained $a(0)$ along the discretized system \eqref{eq:a} to generate the optimal control trajectories $z^a(\cdot)$ and time-dependent momentum $a(\cdot)$. Then, the trajectory of any new point $x \in \R^d$ at any time $t \in [0,1]$ can be computed by integrating the flow equation as in \eqref{eq:practical_solution}.

Naturally, for a non-convex program such as \eqref{eq:emp_problem} the quality of the output solution may heavily depend on the chosen resolution procedure. In the coming experiments, we compare the deformations obtained with both solving strategies.

\subsection{Numerical experiments}\label{sec:num}

We present a series of numerical experiments on synthetic and real 2-D and 3-D shapes. The objective is to illustrate the practical benefits of using a Sinkhorn divergence as the data-fidelity loss. Our Python code\footnote{\url{https://github.com/lucasdelara/lddmm-sinkhorn/}} operates with the GeomLoss package \citep{feydy2019interpolating} to compute the losses and their gradients by automatic differentiation, and the KeOps package \citep{charlier2021kernel} to handle kernel-reduction operations. It is largely inspired by the example codes from these librairies' websites.\footnote{\url{https://www.kernel-operations.io/geomloss/} and \url{https://www.kernel-operations.io/keops/}}

\subsubsection{2-D dataset}

In \citep{feydy2019interpolating}, the authors proposed an alternative measure registration framework based on the gradient flow of the data-fidelity loss. It amounts to updating the source distribution $\alpha_n := n^{-1} \sum^n_{i=1} \delta_{x_i}$ by carrying out a gradient descent on $\Lambda(\alpha_n,\beta_n)$ with respect to the positions $x_1,\ldots,x_n$. This model-free method enables to faithfully match one distribution to another, even when the supports have irregularities such as holes. In this section, we firstly adapt their experiments, more precisely the ones from the example section of the GeomLoss package website, by using diffeomorphic deformations instead of gradient flows.

\begin{table}[t]
        \centering
        \begin{tabular}{c || M{20mm} M{20mm} M{20mm} M{20mm} M{20mm}} 
            $S_{C,\varepsilon}$ & $t=0$ & $t=4$ & $t=8$ & $t=12$ & $t=16/16$\\
            \midrule
            $\varepsilon=1$ & \includegraphics[width=20mm]{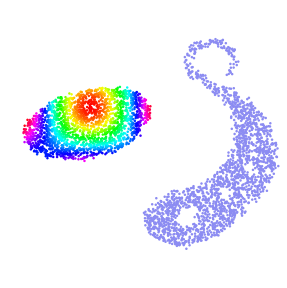} & \includegraphics[width=20mm]{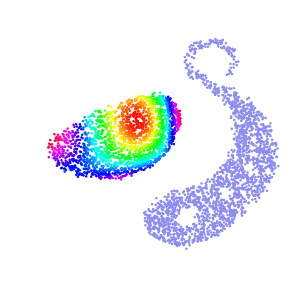} & \includegraphics[width=20mm]{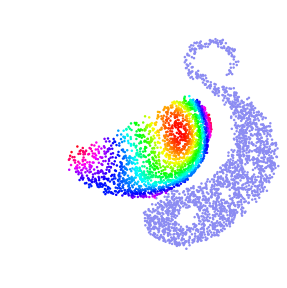} & \includegraphics[width=20mm]{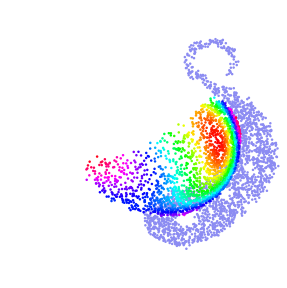} & \includegraphics[width=20mm]{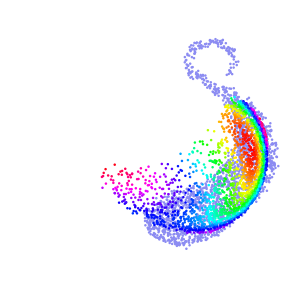} \\
            $\varepsilon={10}^{-2}$ & \includegraphics[width=20mm]{t0.png} & \includegraphics[width=20mm]{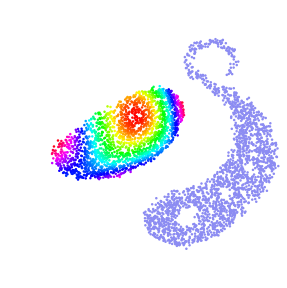} & \includegraphics[width=20mm]{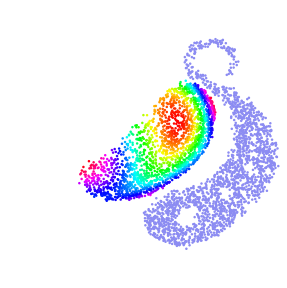} & \includegraphics[width=20mm]{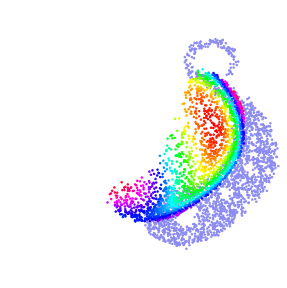} & \includegraphics[width=20mm]{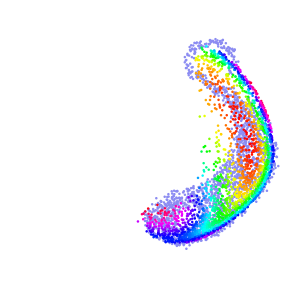} \\
            $\varepsilon={10}^{-4}$ & \includegraphics[width=20mm]{t0.png} & \includegraphics[width=20mm]{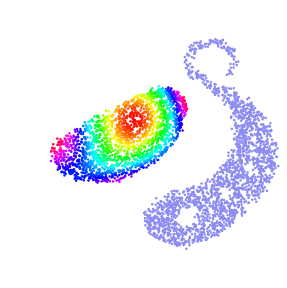} & \includegraphics[width=20mm]{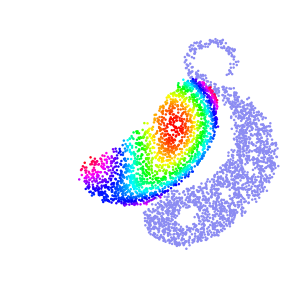} & \includegraphics[width=20mm]{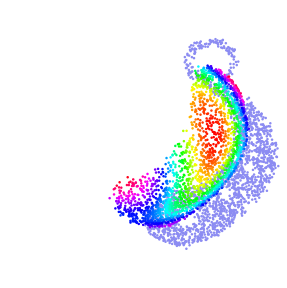} & \includegraphics[width=20mm]{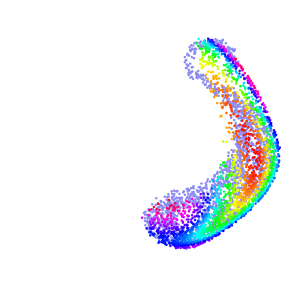} \\
            $\varepsilon={10}^{-6}$ & \includegraphics[width=20mm]{t0.png} & \includegraphics[width=20mm]{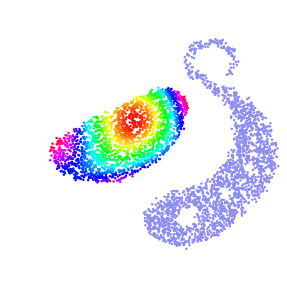} & \includegraphics[width=20mm]{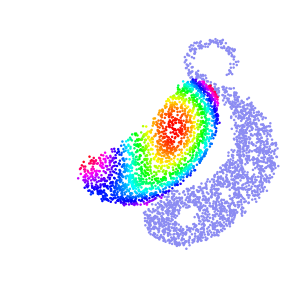} & \includegraphics[width=20mm]{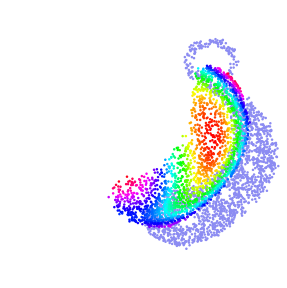} & \includegraphics[width=20mm]{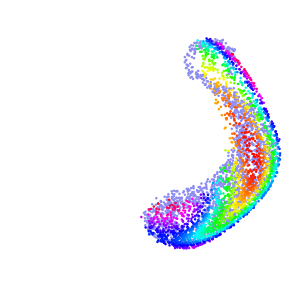} \\
            \midrule
            \midrule
            $\T_{C,\varepsilon}$ & $t=0$ & $t=4$ & $t=8$ & $t=12$ & $t=16/16$\\
            \midrule
            $\varepsilon=1$ & \includegraphics[width=20mm]{t0.png} & \includegraphics[width=20mm]{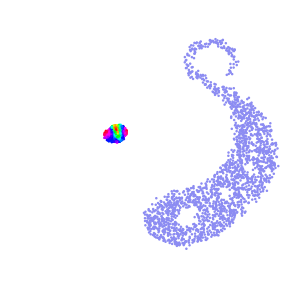} & \includegraphics[width=20mm]{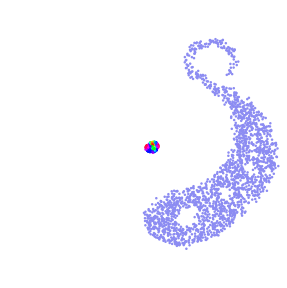} & \includegraphics[width=20mm]{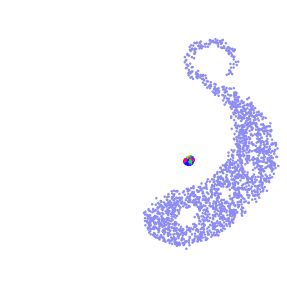} & \includegraphics[width=20mm]{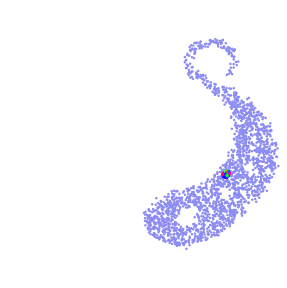} \\
            $\varepsilon={10}^{-2}$ & \includegraphics[width=20mm]{t0.png} & \includegraphics[width=20mm]{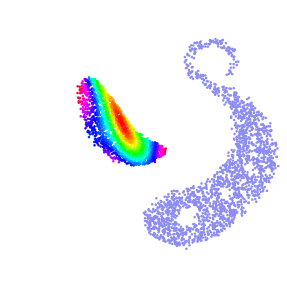} & \includegraphics[width=20mm]{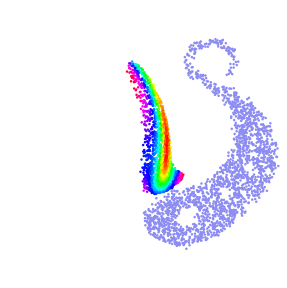} & \includegraphics[width=20mm]{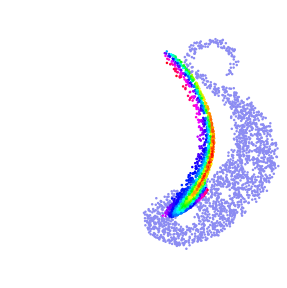} & \includegraphics[width=20mm]{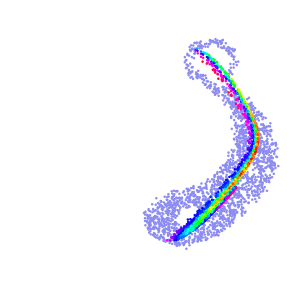} \\
            $\varepsilon={10}^{-4}$ & \includegraphics[width=20mm]{t0.png} & \includegraphics[width=20mm]{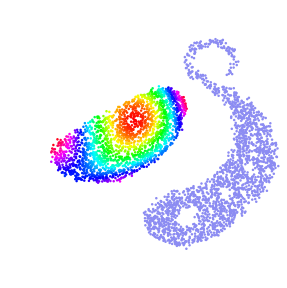} & \includegraphics[width=20mm]{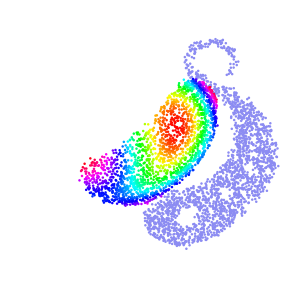} & \includegraphics[width=20mm]{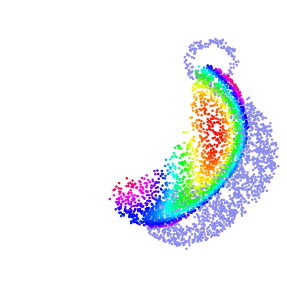} & \includegraphics[width=20mm]{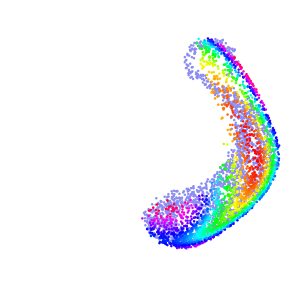} \\
            $\varepsilon={10}^{-6}$ & \includegraphics[width=20mm]{t0.png} & \includegraphics[width=20mm]{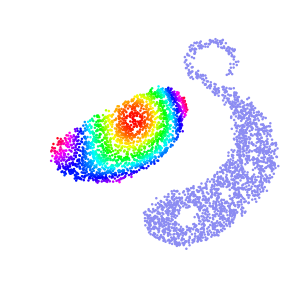} & \includegraphics[width=20mm]{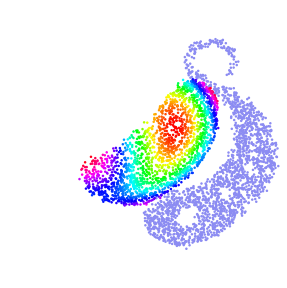} & \includegraphics[width=20mm]{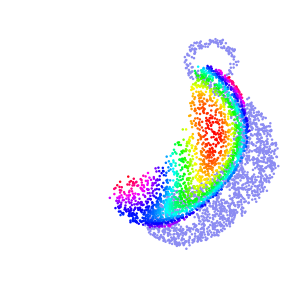} & \includegraphics[width=20mm]{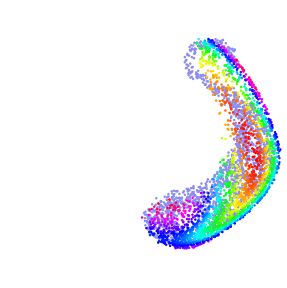} \\
        \end{tabular}
        \caption{Optimal-transport-driven 2-D diffeomorphic registration optimized by GDM. The colored distribution is $\alpha_m(t)$, while the blue distribution is $\beta_m$.}
        \label{fig:exp_1}
\end{table}

\begin{table}[t]
        \centering
        \begin{tabular}{c || M{20mm} M{20mm} M{20mm} M{20mm} M{20mm}} 
            $S_{C,\varepsilon}$ & $t=0$ & $t=4$ & $t=8$ & $t=12$ & $t=16/16$\\
            \midrule
            $\varepsilon=1$ & \includegraphics[width=20mm]{t0.png} & \includegraphics[width=20mm]{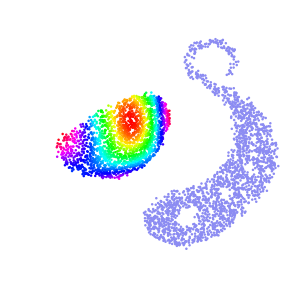} & \includegraphics[width=20mm]{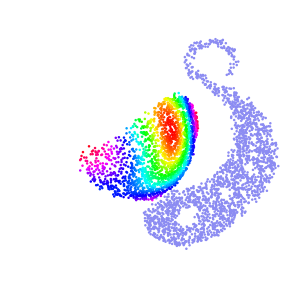} & \includegraphics[width=20mm]{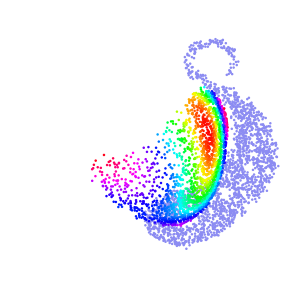} & \includegraphics[width=20mm]{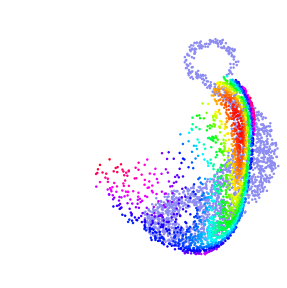} \\
            $\varepsilon={10}^{-2}$ & \includegraphics[width=20mm]{t0.png} & \includegraphics[width=20mm]{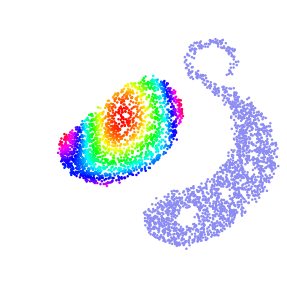} & \includegraphics[width=20mm]{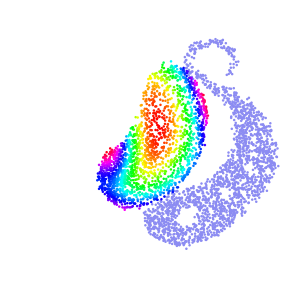} & \includegraphics[width=20mm]{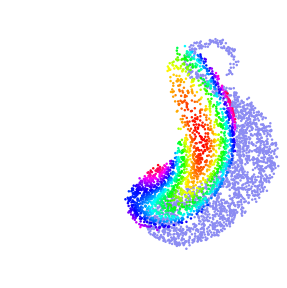} & \includegraphics[width=20mm]{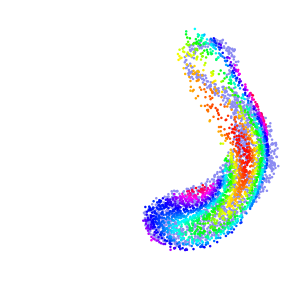} \\
            $\varepsilon={10}^{-4}$ & \includegraphics[width=20mm]{t0.png} & \includegraphics[width=20mm]{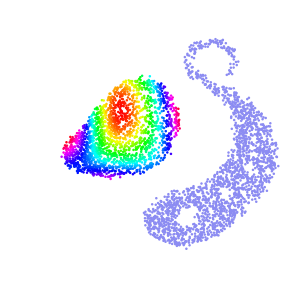} & \includegraphics[width=20mm]{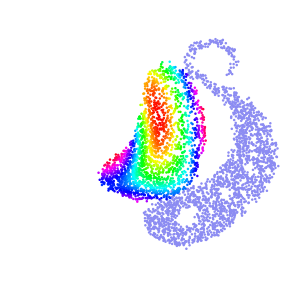} & \includegraphics[width=20mm]{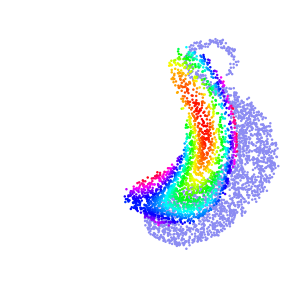} & \includegraphics[width=20mm]{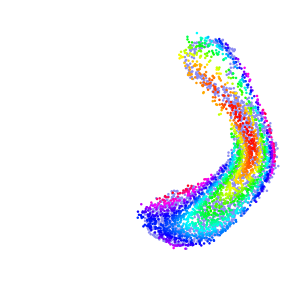} \\
            $\varepsilon={10}^{-6}$ & \includegraphics[width=20mm]{t0.png} & \includegraphics[width=20mm]{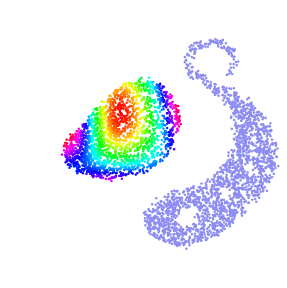} & \includegraphics[width=20mm]{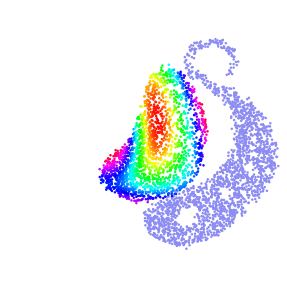} & \includegraphics[width=20mm]{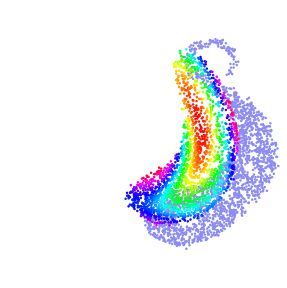} & \includegraphics[width=20mm]{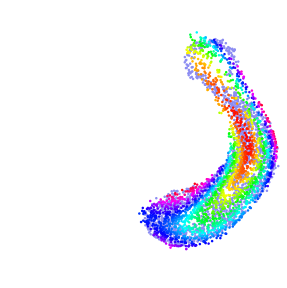} \\
            \midrule
            \midrule
            $\T_{C,\varepsilon}$ & $t=0$ & $t=4$ & $t=8$ & $t=12$ & $t=16/16$\\
            \midrule
            $\varepsilon=1$ & \includegraphics[width=20mm]{t0.png} & \includegraphics[width=20mm]{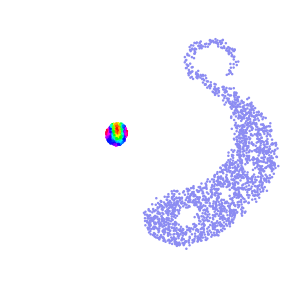} & \includegraphics[width=20mm]{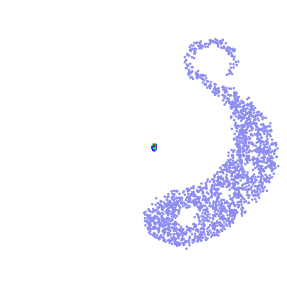} & \includegraphics[width=20mm]{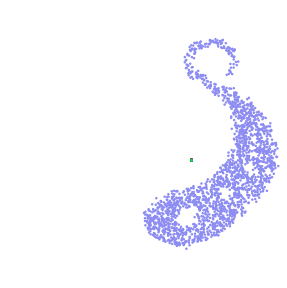} & \includegraphics[width=20mm]{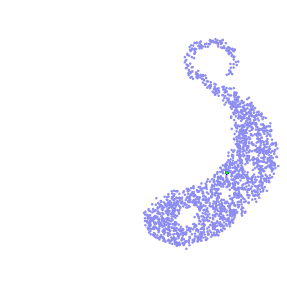} \\
            $\varepsilon={10}^{-2}$ & \includegraphics[width=20mm]{t0.png} & \includegraphics[width=20mm]{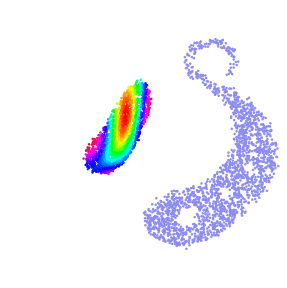} & \includegraphics[width=20mm]{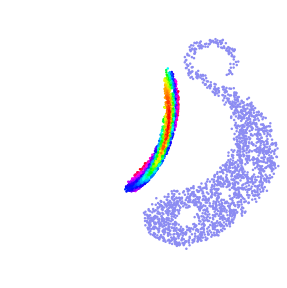} & \includegraphics[width=20mm]{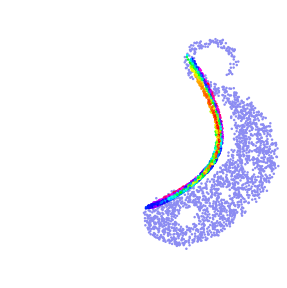} & \includegraphics[width=20mm]{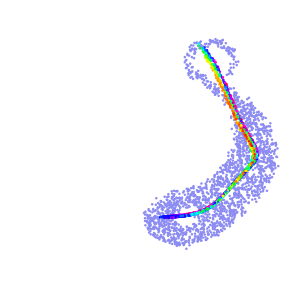} \\
            $\varepsilon={10}^{-4}$ & \includegraphics[width=20mm]{t0.png} & \includegraphics[width=20mm]{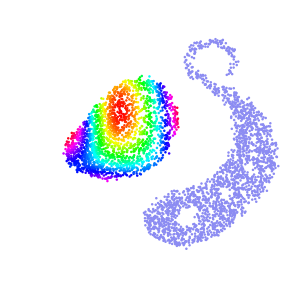} & \includegraphics[width=20mm]{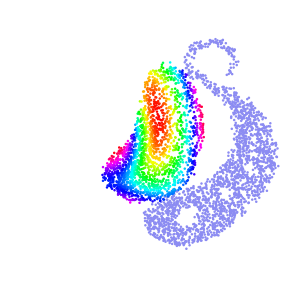} & \includegraphics[width=20mm]{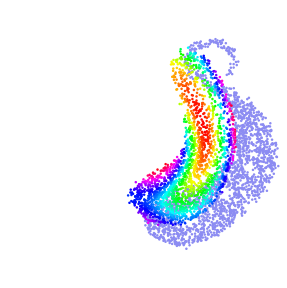} & \includegraphics[width=20mm]{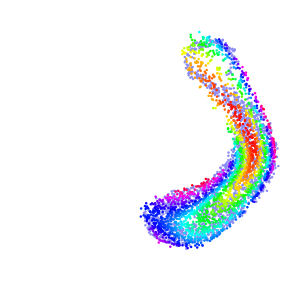} \\
            $\varepsilon={10}^{-6}$ & \includegraphics[width=20mm]{t0.png} & \includegraphics[width=20mm]{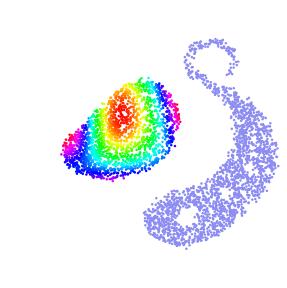} & \includegraphics[width=20mm]{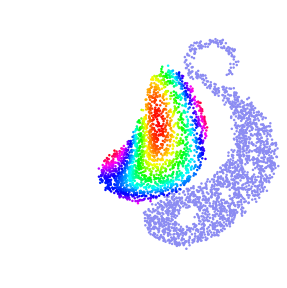} & \includegraphics[width=20mm]{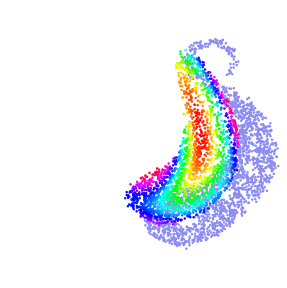} & \includegraphics[width=20mm]{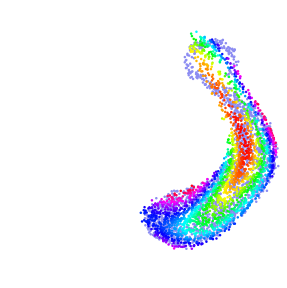} \\
        \end{tabular}
        \caption{Optimal-transport-driven 2-D diffeomorphic registration optimized by GS. The colored distribution is $\alpha_m(t)$, while the blue distribution is $\beta_m$.}
        \label{fig:exp_2}
\end{table}

The objective is matching two blob-like point clouds in dimension 2. We proceed as follows. Firstly, we learn the optimal matching between two samples of size $n=1,000$ using each of the two previously described procedures. Secondly, we display the obtained time interpolation between two new independent samples of size $m=2,000$. In order to benchmark the influence of the data-fidelity loss, we consider a fixed setting where $V$ is defined through a Gaussian kernel with bandwidth $\sigma=0.175$, the regularization has weight $\lambda = {10}^{-8}$, and the time scale is uniformly divided into $\tau=16$ intervals. Then, we compare the results for different losses: (unbiased) Sinkhorn divergences, biased entropic transportation costs, and squared Gaussian maximum mean discrepancies. Recall that the squared Gaussian MMD with bandwidth parameter $\theta>0$ is defined as,
\[
    \operatorname{MMD}^2_\theta(\mu,\nu) := \int_{\R^d \times \R^d} \exp\left( - \frac{\norm{x-y}^2}{2 \theta^2} \right) \mathrm{d}(\mu - \nu)(x) \mathrm{d}(\mu - \nu)(y).
\]
The ground cost function for the Sinkhorn divergences is always $C(x,y) := \norm{x-y}^2$ throughout the experiments. Figures~\ref{fig:exp_1}~to~\ref{fig:exp_3} compare the optimal matchings obtained with respectively the gradient descent on the momentum (GDM) and geodesic shooting (GS) for different values of the relevant parameters $\varepsilon$ and $\theta$. Note that whatever the minimization strategy, we used a fixed number of iterations with a constant learning rate, and initialized the momentum with the zero tensor. Also, while we programmed a standard gradient descent for GDM, we relied on the PyTorch \citep{paszke2019pytorch} in-built L-BFGS solver for the geodesic shooting. The results are arranged as follows: Figure~\ref{fig:exp_1} shows the deformations for both Sinkhorn divergences and (biased) entropic transportation costs optimized with GDM; Figure~\ref{fig:exp_2} is the counterpart of Figure~\ref{fig:exp_1} for GS; Figure~\ref{fig:exp_3} displays the deformations generated by Gaussian maximum mean discrepancies for both resolution procedures.

\begin{table}[t]
        \centering
        \begin{tabular}{c || M{20mm} M{20mm} M{20mm} M{20mm} M{20mm}} 
            GDM & $t=0$ & $t=4$ & $t=8$ & $t=12$ & $t=16/16$\\
            \midrule
            $\theta=1$ & \includegraphics[width=20mm]{t0.png} & \includegraphics[width=20mm]{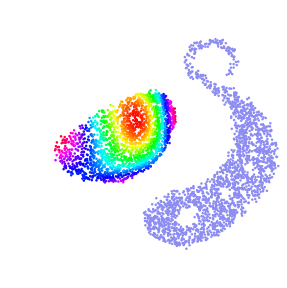} & \includegraphics[width=20mm]{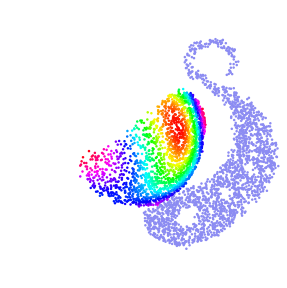} & \includegraphics[width=20mm]{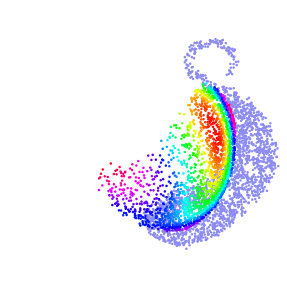} & \includegraphics[width=20mm]{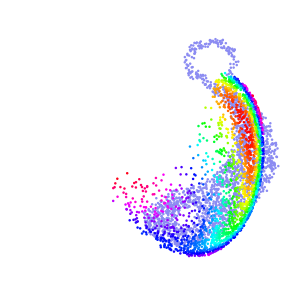} \\
            $\theta=0.5$ & \includegraphics[width=20mm]{t0.png} & \includegraphics[width=20mm]{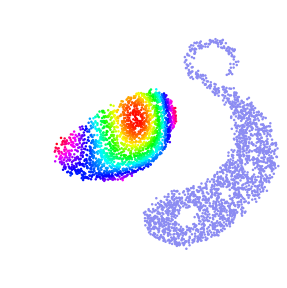} & \includegraphics[width=20mm]{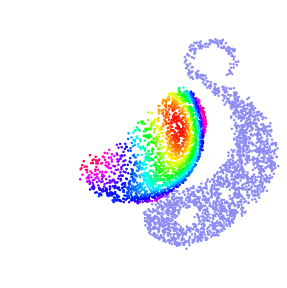} & \includegraphics[width=20mm]{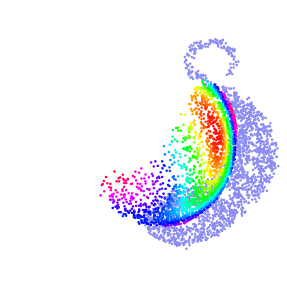} & \includegraphics[width=20mm]{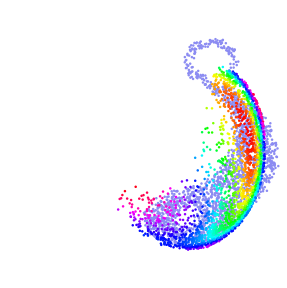} \\
            $\theta=0.1$ & \includegraphics[width=20mm]{t0.png} & \includegraphics[width=20mm]{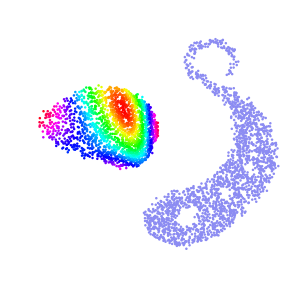} & \includegraphics[width=20mm]{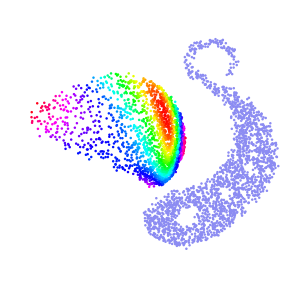} & \includegraphics[width=20mm]{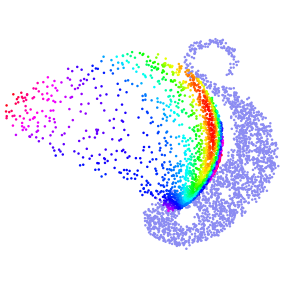} & \includegraphics[width=20mm]{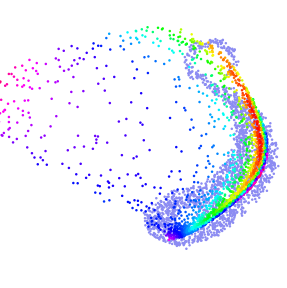} \\
            \midrule
            \midrule
            GS & $t=0$ & $t=4$ & $t=8$ & $t=12$ & $t=16/16$\\
            \midrule
            $\theta=1$ & \includegraphics[width=20mm]{t0.png} & \includegraphics[width=20mm]{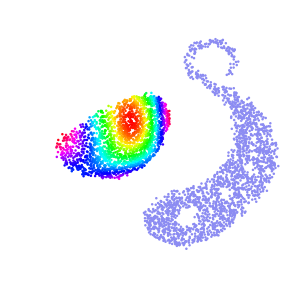} & \includegraphics[width=20mm]{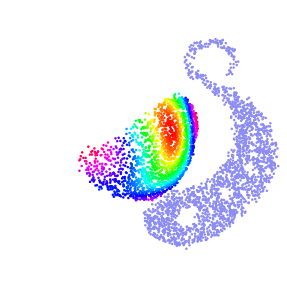} & \includegraphics[width=20mm]{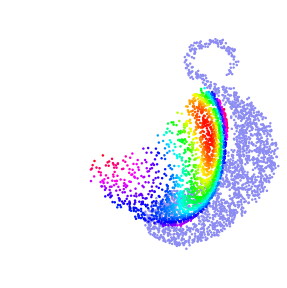} & \includegraphics[width=20mm]{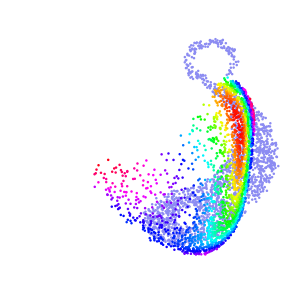} \\
            $\theta=0.5$ & \includegraphics[width=20mm]{t0.png} & \includegraphics[width=20mm]{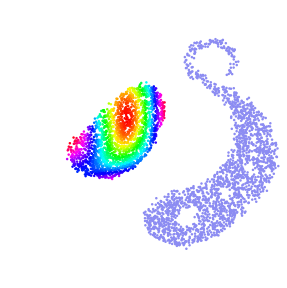} & \includegraphics[width=20mm]{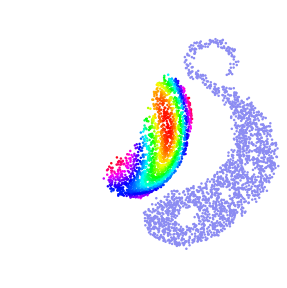} & \includegraphics[width=20mm]{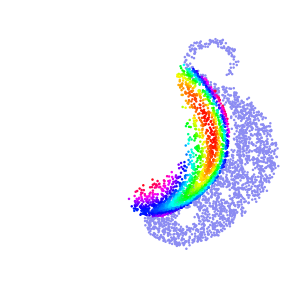} & \includegraphics[width=20mm]{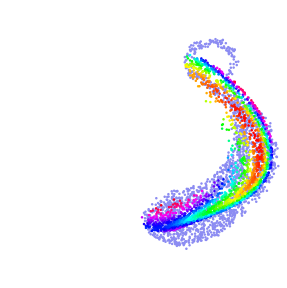} \\
            $\theta=0.1$ & \includegraphics[width=20mm]{t0.png} & \includegraphics[width=20mm]{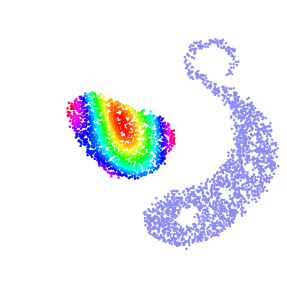} & \includegraphics[width=20mm]{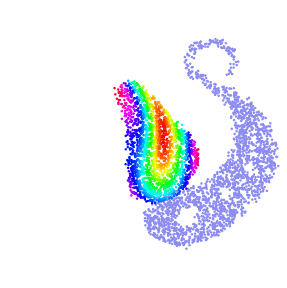} & \includegraphics[width=20mm]{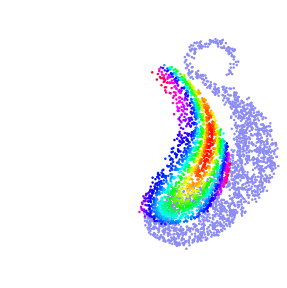} & \includegraphics[width=20mm]{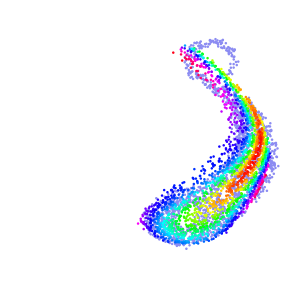} \\
        \end{tabular}
        \caption{2-D diffeomorphic registration driven by $\operatorname{MMD}^2_\theta$. The colored distribution is $\alpha_m(t)$, while the blue distribution is $\beta_m$.}
        \label{fig:exp_3}
\end{table}

Firstly, we observe from Figures~\ref{fig:exp_1}~and~\ref{fig:exp_2} that entropic optimal-transport metrics yield consistent results across minimization strategies. In contrast, the registration for maximum mean discrepancies depicted in Figure~\ref{fig:exp_3} varies with the chosen methods. This instability of the optimization problem underlines that MMDs give more local minima.

Secondly, Figures~\ref{fig:exp_1}~and~\ref{fig:exp_2} clearly exhibit the entropic bias: in contrast to Sinkhorn divergences, standard entropic transportation costs shrink the morphed distribution for large values of the regularization parameter $\varepsilon$, leading to unacceptable registrations. However, choosing a too large $\varepsilon$ for the unbiased divergence yields a blurry, poorly accurate solution. As expected, debiasing becomes less critical as the regularization diminishes, and both entropic losses provide sharp matchings for small values of $\varepsilon$. Note also that there is no need to decrease $\varepsilon$ below a certain threshold to ensure accurate deformations.

Finally, Figure~\ref{fig:exp_3} indicates that the consistency of the results between resolution procedures weakens as the bandwidth of the Gaussian kernel decreases. This is due to Gaussian maximum mean discrepancies ignoring disparities smaller than $\theta$. As such, setting a large bandwidth facilitates the registration but degrades the quality of the matching. In contrast, a small bandwidth allows for sharper registration but induces more local minima. This aspect is epitomized for $\theta=0.1$ in the experiments: with the gradient descent on the time-dependent momentum, the morphed points end up diverging, trapped into minimizing the auto-correlation contribution of the MMD, while geodesic shooting produces a fine matching.

All in all, our experimental observations about the role of the losses are similar to the ones made by \cite{feydy2019interpolating} in the context of gradient flows. Critically, compared to their approach, we work with a transformation that is smooth at any time. This regularity constraint reduces the flexibility of the matching, which leads to a less accurate fitting than gradient flows. This affects particularly the anomalous parts of the targeted support, namely the holes and the tail. In contrast, regularity enables the deformation to generalize to any new out-of-sample observations. Additionally, it prevents from tearing the mass apart. The color map on the distribution $\alpha_m(t)$ enables to track the location of the moved points through time. Notice that, as a direct consequence of the smoothness, the chromatic continuity between morphed points is preserved throughout the process.

\begin{table}[t]
        \centering
        \begin{tabular}{c || M{20mm} || M{20mm} | M{20mm} | M{20mm} | M{20mm} |} 
            \diagbox{Loss}{Init} & Zero & \begin{tabular}{@{}c@{}}$S_{C,\varepsilon}$ \\ $\varepsilon = {10}^{-4}$\end{tabular} & \begin{tabular}{@{}c@{}}$S_{C,\varepsilon}$ \\ $\varepsilon = 1$\end{tabular} & \begin{tabular}{@{}c@{}}$\operatorname{MMD}^2_\theta$ \\ $\theta = 0.1$\end{tabular} & \begin{tabular}{@{}c@{}}$\operatorname{MMD}^2_\theta$ \\ $\theta = 0.5$\end{tabular}\\
            \midrule
            \begin{tabular}{@{}c@{}}$S_{C,\varepsilon}$ \\ $\varepsilon = {10}^{-4}$\end{tabular} & \includegraphics[width=20mm]{16_SD_1e-2_gs.png} & \includegraphics[width=20mm]{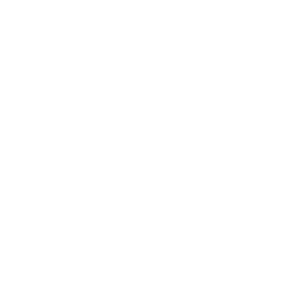} & \includegraphics[width=20mm]{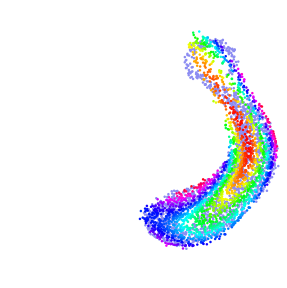} &
            \includegraphics[width=20mm]{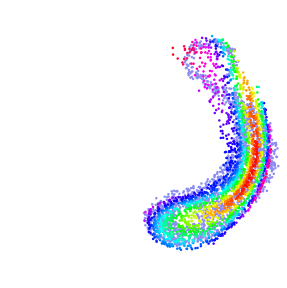} &
            \includegraphics[width=20mm]{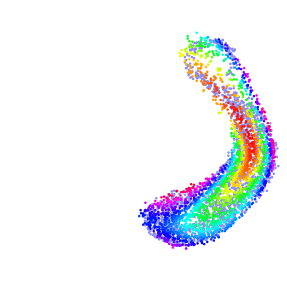} \\
            \midrule
            \begin{tabular}{@{}c@{}}$S_{C,\varepsilon}$ \\ $\varepsilon = 1$\end{tabular} & \includegraphics[width=20mm]{16_SD_1_gs.png} & \includegraphics[width=20mm]{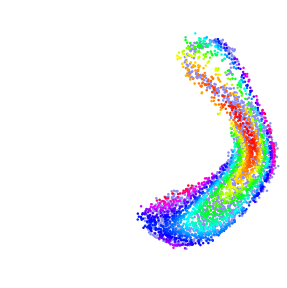} &
            \includegraphics[width=20mm]{blank.png} &
            \includegraphics[width=20mm]{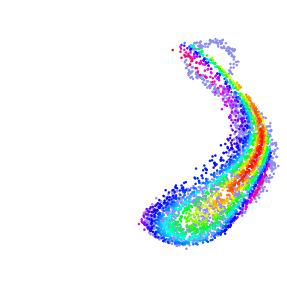} &
            \includegraphics[width=20mm]{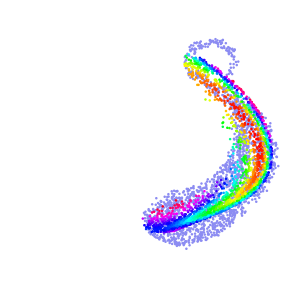} \\
            \midrule
            \begin{tabular}{@{}c@{}}$\operatorname{MMD}^2_\theta$ \\ $\theta = 0.1$\end{tabular} & \includegraphics[width=20mm]{16_MMD_1e-1_gs.png} & \includegraphics[width=20mm]{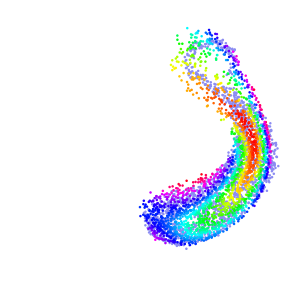} & \includegraphics[width=20mm]{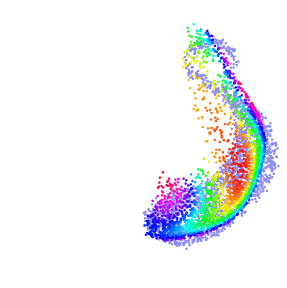} & \includegraphics[width=20mm]{blank.png} & \includegraphics[width=20mm]{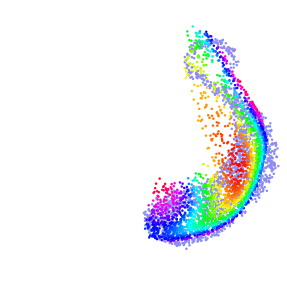} \\
            \midrule
            \begin{tabular}{@{}c@{}}$\operatorname{MMD}^2_\theta$ \\ $\theta = 0.5$\end{tabular} & \includegraphics[width=20mm]{16_MMD_5e-1_gs.png} & \includegraphics[width=20mm]{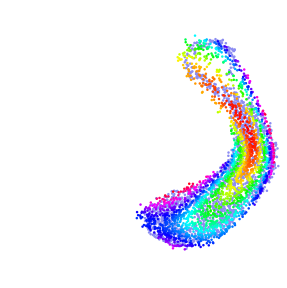} & \includegraphics[width=20mm]{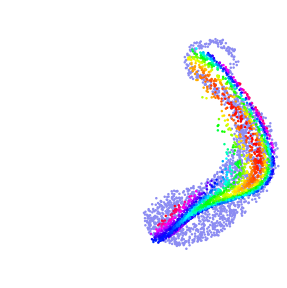} & \includegraphics[width=20mm]{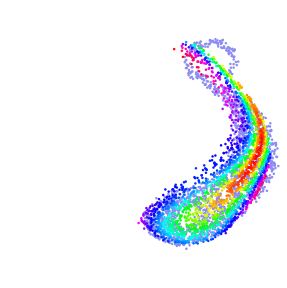} & \includegraphics[width=20mm]{blank.png} \\
            \midrule
            \midrule
            \begin{tabular}{@{}c@{}}$S_{C,\varepsilon}$ \\ $\varepsilon = {10}^{-4}$\end{tabular} & $\mathbf{7.35 \cdot 10^{-5}}$ &  & \begin{tabular}{@{}c@{}}$1.01 \cdot 10^{-4}$ \\ ($+37,4\%$)\end{tabular}  & \begin{tabular}{@{}c@{}}$9.77 \cdot 10^{-5}$ \\ ($+32,9\%$)\end{tabular}
             & \begin{tabular}{@{}c@{}}$7.72 \cdot 10^{-5}$ \\ ($+5,0\%$)\end{tabular}
             \\
            \midrule
            \begin{tabular}{@{}c@{}}$S_{C,\varepsilon}$ \\ $\varepsilon = 1$\end{tabular} & $1.61 \cdot 10^{-6}$ & \begin{tabular}{@{}c@{}}$\mathbf{3.06 \cdot 10^{-8}}$ \\ ($-98,1\%$)\end{tabular}  &
             & \begin{tabular}{@{}c@{}}$5.26 \cdot 10^{-8}$ \\ ($-96,7\%$)\end{tabular}
             & \begin{tabular}{@{}c@{}}$3.60 \cdot 10^{-8}$ \\ ($-97,8\%$)\end{tabular}
             \\
            \midrule
            \begin{tabular}{@{}c@{}}$\operatorname{MMD}^2_\theta$ \\ $\theta = 0.1$\end{tabular} & $3.39 \cdot 10^{-4}$ & \begin{tabular}{@{}c@{}}$\mathbf{1.01 \cdot 10^{-4}}$ \\ ($-70,2\%$)\end{tabular} & \begin{tabular}{@{}c@{}}$4.06 \cdot 10^{-4}$ \\ ($+19,8\%$)\end{tabular} &  & \begin{tabular}{@{}c@{}}$3.59 \cdot 10^{-4}$ \\ ($+5,9\%$)\end{tabular} \\
            \midrule
            \begin{tabular}{@{}c@{}}$\operatorname{MMD}^2_\theta$ \\ $\theta = 0.5$\end{tabular} & $1.55 \cdot 10^{-6}$ & \begin{tabular}{@{}c@{}}$\mathbf{6.30 \cdot 10^{-8}}$ \\ ($-95,9\%$)\end{tabular} & \begin{tabular}{@{}c@{}}$3.34 \cdot 10^{-6}$ \\ ($+115,5\%$)\end{tabular} & \begin{tabular}{@{}c@{}}$1.23 \cdot 10^{-7}$ \\ ($-92,1\%$)\end{tabular} &  \\
            \bottomrule
        \end{tabular}
        \caption{2-D diffeomorphic matching optimized by GS with warm start. A row specifies the studied loss while each column refers to an initialization. Except for the first column which indicates the initialization to zero, the columns correspond to a solution from the previous experiments. The first four rows show the final matchings while the last four rows give the associated loss values with their growth rates compared to the initialization via zero; row-wise-minimal loss values are written in bold.}
        \label{fig:exp_4}
\end{table}

Before turning to more complex 3-D shapes, let us push further the quality analysis of local minima on this illustrative dataset. In the sequel, we consider the same setting as before, and focus on the optimal matchings obtained by geodesic shooting for Sinkhorn divergences and Gaussian maximum mean discrepancies with different parameter values. However, instead of initializing the optimized variable $a(0)$ to zero, we now study the stability and accuracy of the solutions over various initial values. More specifically, we rely on a warm-start strategy: solutions from the above experiments are reused as starting points in the solver. The results are gathered in Figure~\ref{fig:exp_4}, which reports the final matchings obtained with different initializations along with their associated loss values. 

Let us firstly analyze the results for the losses that previously gave the finest registrations: the Sinkhorn divergence with $\varepsilon = {10}^{-4}$ (rows 1 and 5) and the Gaussian MMD with $\theta = 0.1$ (rows 3 and 7). As anticipated, the matchings vary with the initialization. Visually, this phenomenon is stronger for the MMD than for the Sinkhorn divergence and the quality of the final matchings remains quite accurate for the optimal-transport loss. By checking the loss values, we note that the warm start downgrades the solutions for both losses, except for the MMD using initialization via $S_{C,\varepsilon}$ with $\varepsilon = {10}^{-4}$ which gets significantly closer to the global minimum. In sum, it seems that the entropic divergence induces fewer or better local minima. Regarding the Sinkhorn divergence with $\varepsilon = 1$ (rows 2 and 6) and and the MMD with $\theta = 0.5$ (rows 4 and 8), which previously yielded imprecise matchings, they have analogous behaviours with respect to warm start. We observe that the results are less robust to initialization and can be significantly improved by using already accurate solutions as starting points, underlining that the registrations obtained with the initialization to zero corresponded to bad local minima.

\subsubsection{3-D surfaces}

In a second time, we implement the diffeomorphic matching of two shapes embedded in $\R^3$: the source is the unit sphere while the target is the centered scaled Stanford bunny,\footnote{\url{http://graphics.stanford.edu/data/3Dscanrep/}} both encoded through the associated uniform distributions. Similarly to the above experiments, we firstly learn the diffeomorphism on a training set of size $n = 5,000$ using geodesic shooting for various losses, and then display the final matching on a testing set of size $m = 10,000$. The setup is characterized by $\sigma=0.05$,  $\lambda = {10}^{-8}$, and $\tau=16$. The results can be found in Figure~\ref{fig:exp_3d}. We make comparable observations to before. Powering diffeomorphic registration with a Sinkhorn divergence instead of the biased regularized cost avoids the shrinkage effect of the entropic bias for large values of $\varepsilon$, and the matchings are accurate for both losses when $\varepsilon$ is small. The Gaussian MMD requires a small bandwidth $\theta$ to potentially fit the bunny, but the solution falls into a poor local minima where several morphed points are not attracted by the target. Note also that, due to their regularity, the deformations tend to smooth the sharpest edges of the target bunny.  

\begin{figure}
     \centering
     \begin{subfigure}[b]{0.25\textwidth}
         \centering
         \includegraphics[width=\textwidth]{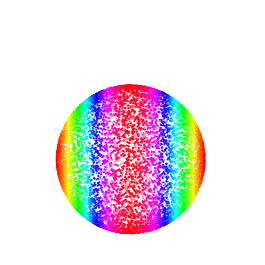}
         \caption{Source}
         \label{fig:3d_source}
     \end{subfigure}
     \hfill
     \begin{subfigure}[b]{0.25\textwidth}
         \centering
         \includegraphics[width=\textwidth]{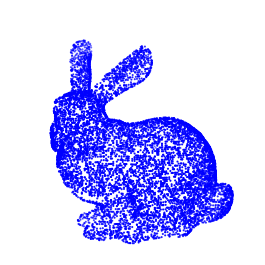}
         \caption{Target}
         \label{fig:3d_target}
     \end{subfigure}
     
     \begin{subfigure}[b]{0.25\textwidth}
         \centering
         \includegraphics[width=\textwidth]{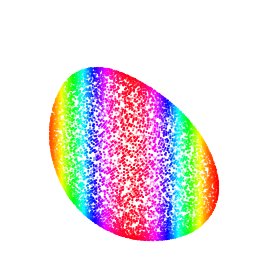}
         \caption{$S_{C,\varepsilon}:\ \varepsilon=1$}
         \label{fig:3d_SD_1}
     \end{subfigure}
     \hfill
     \begin{subfigure}[b]{0.25\textwidth}
         \centering
         \includegraphics[width=\textwidth]{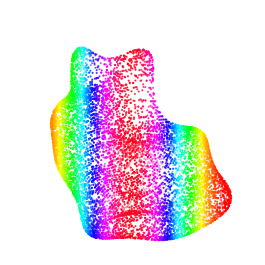}
         \caption{$S_{C,\varepsilon}:\ \varepsilon={10}^{-2}$}
         \label{fig:3d_SD_1e-1}
     \end{subfigure}
     \hfill
     \begin{subfigure}[b]{0.25\textwidth}
         \centering
         \includegraphics[width=\textwidth]{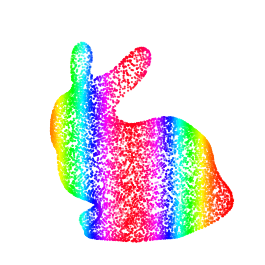}
         \caption{$S_{C,\varepsilon}:\ \varepsilon={10}^{-4}$}
         \label{fig:3d_SD_1e-2}
     \end{subfigure}
     \hfill
     
     \begin{subfigure}[b]{0.25\textwidth}
         \centering
         \includegraphics[width=\textwidth]{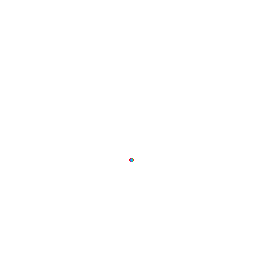}
         \caption{$\T_{C,\varepsilon}:\ \varepsilon=1$}
         \label{fig:3d_ROT_1}
     \end{subfigure}
     \hfill
     \begin{subfigure}[b]{0.25\textwidth}
         \centering
         \includegraphics[width=\textwidth]{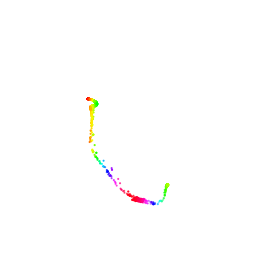}
         \caption{$\T_{C,\varepsilon}:\ \varepsilon={10}^{-2}$}
         \label{fig:3d_ROT_1e-1}
     \end{subfigure}
     \hfill
     \begin{subfigure}[b]{0.25\textwidth}
         \centering
         \includegraphics[width=\textwidth]{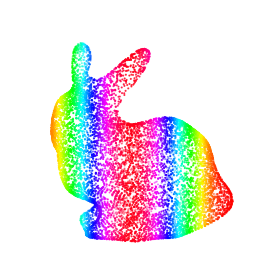}
         \caption{$\T_{C,\varepsilon}:\ \varepsilon={10}^{-4}$}
         \label{fig:3d_ROT_1e-2}
     \end{subfigure}
     \hfill
     
     \begin{subfigure}[b]{0.25\textwidth}
         \centering
         \includegraphics[width=\textwidth]{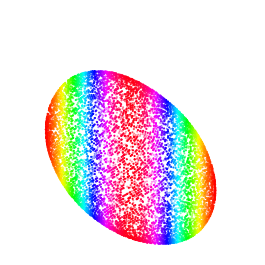}
         \caption{$\operatorname{MMD}^2_\theta:\ \theta=1$}
         \label{fig:3d_MMD_1}
     \end{subfigure}
     \hfill
     \begin{subfigure}[b]{0.25\textwidth}
         \centering
         \includegraphics[width=\textwidth]{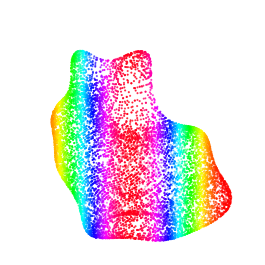}
         \caption{$\operatorname{MMD}^2_\theta:\ \theta=0.1$}
         \label{fig:3d_MMD_1e-1}
     \end{subfigure}
     \hfill
     \begin{subfigure}[b]{0.25\textwidth}
         \centering
         \includegraphics[width=\textwidth]{
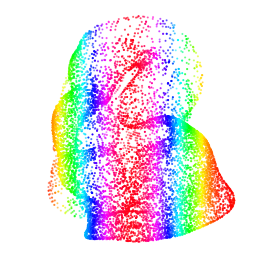}
         \caption{$\operatorname{MMD}^2_\theta:\ \theta=0.5$}
         \label{fig:3d_MMD_1e-2}
     \end{subfigure}

        \caption{3-D diffeomorphic matchings. Both shapes (a) and (b) are centered scaled.}
        \label{fig:exp_3d}
\end{figure}

\section{Conclusion}

We proposed to use Sinkhorn divergences as the fidelity loss in diffeomorphic registration problems. We derived the statistical theory, and illustrated the efficiency of this method compared to past approaches based on MMDs or \emph{biased} entropic transportation costs. As such, this paper paves way for accurate and smooth measure registration with certifiable asymptotic guarantees. Moreover, carrying out this work led us to further investigate the dual formulation of entropic optimal transport, complementing recent papers on the subject. A first avenue for extension could be to consider the registration of \emph{unbalanced} measures using Sinkhorn divergences, which would align with the work of \cite{feydy2017optimal}. A second one could be to derive sharper rates of convergences. Notably, \citep{gonzalez2022improved} which demonstrates faster convergence rates for the empirical entropic transportation potentials and \citep{chizat2020faster} which shows that debiasing decreases the approximation error of optimal transport induced by entropic regularization could serve as inspirations.

\newpage
\appendix

\section{Preliminary results}\label{sec:background}

This section recalls some useful results. Section~\ref{sec:empirical_processes} contains a brief reminder on entropy numbers of classes of functions, in order to derive an upper bound on empirical processes; Section~\ref{sec:frechet} focuses on the chain rule for composite Frechet derivatives up to arbitrary high orders.

\subsection{Empirical processes}\label{sec:empirical_processes}

In the proof of Proposition~\ref{prop:key_prop}, we will bound the sampling error between the empirical entropic transportation cost and its population counterpart by a centered empirical process indexed by a class of smooth functions. Recalling the theory introduced in \citep{van1996weak,koltchinskii2011oracle}, we present in this subsection intermediary results on such processes.

Let $\X$ be a compact convex subset of $\R^d$. For any probability measure $\mu$ on $\X$ and $r \geq 1$, we define the $L_r(\mu)$-norm on $\C(\X,\R)$ as $\norm{h}_{r,\mu} := \big(\int |h|^r \mathrm{d}\mu\big)^{1/r}$. In empirical process theory, the complexity of classes of functions is commonly evaluated through the so-called \emph{covering} and \emph{bracketing} numbers. Let $\H$ be a class of function included in $\C(\X,\R)$, and $\epsilon>0$ a constant. The covering number $N(\epsilon, \H, L_r(\mu))$ is defined as the minimal number of $L_r(\mu)$-balls of radius $\epsilon$ needed to cover the class of functions $\H$. The center of the balls need not belong to $\H$, but must have finite norm. Additionally, given two functions $l$ and $u$ with finite norm but not necessarily in $\H$, the \emph{bracket} $[l,u]$ is the set of all functions $h$ such that $l \leq h \leq u$. An $(\epsilon, L_r(\mu))$-bracket is a bracket $[l,u]$ such that $\norm{l-u}_{r,\mu} \leq \epsilon$. Then, the bracketing number $N_{[\ ]}(\epsilon, \H, L_r(\mu))$ is the minimal number of $(\epsilon, L_r(\mu))$-bracket needed to cover $\H$.

These numbers have essential applications in statistics. The supremum of a centered empirical process indexed by a class of functions with a finite bracketing number converges uniformly almost-surely to zero. Moreover, with a sharper control on the bracketing number, one can derive the following convergence rate:

\begin{proposition}\label{prop:upper}

Let $\mu_n$ be an empirical measure of a probability measure $\mu$ corresponding to a compact convex subset $\X$ of $\R^d$, and set $H>0$ a constant. Consider the class of functions $\H := \C^{\kappa}_H(\X,\R)$ for some integer $\kappa \geq 0$. If $\kappa > d/2$, then there exists a constant $A = A((H,\kappa);(\X,d))$ such that,
\begin{equation*}
    \E\left[\sup_{h \in \H} \abs{\mu_n(h)-\mu(h)}\right] \leq \frac{A}{\sqrt{n}}.
\end{equation*}

\end{proposition}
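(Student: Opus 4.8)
The plan is to control the expected supremum of the centered empirical process indexed by the smooth class $\H = \C^{\kappa}_H(\X,\R)$ through a maximal inequality governed by its bracketing entropy, and then to show that the condition $\kappa > d/2$ is precisely what renders the relevant bracketing integral finite.

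First I would bound the bracketing numbers of $\H$. The key input is the classical metric-entropy estimate for Hölder balls on a bounded convex domain \citep{van1996weak}: for a class on $\X \subset \R^d$ whose derivatives up to order $\lceil\alpha\rceil-1$ are uniformly bounded and whose top-order derivatives are uniformly Hölder of exponent $\alpha - (\lceil\alpha\rceil-1)$, one has $\log N_{[\ ]}(\epsilon, \cdot, \norm{\cdot}_\infty) \leq K\,\epsilon^{-d/\alpha}$. To apply this with integer smoothness, I would use that since $\X$ is convex and the $\kappa$-th derivative of any $h \in \H$ is bounded by $H$, the mean-value inequality makes the $(\kappa-1)$-th derivative $H$-Lipschitz on $\X$. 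Hence $\H$ is contained in a Hölder ball with effective exponent $\alpha = \kappa$ and top-order Hölder exponent $1$, up to a universal constant factor on the radius, giving
\[
\log N_{[\ ]}(\epsilon, \H, \norm{\cdot}_\infty) \leq K(H,\kappa,\X,d)\,\epsilon^{-d/\kappa}.
\]
Because $\mu$ is a probability measure, $\norm{h}_{2,\mu} \leq \norm{h}_\infty$, so the same bound holds for the $L_2(\mu)$-bracketing numbers.

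Next I would invoke a maximal inequality expressing the expected supremum in terms of the bracketing integral \citep{van1996weak,koltchinskii2011oracle}. Writing $\mathbb{G}_n := \sqrt{n}(\mu_n - \mu)$ and noting that the constant $H$ is an envelope for $\H$ (every $h$ being bounded by $H$), this yields
\[
\E\left[\sup_{h \in \H}\abs{\mu_n(h)-\mu(h)}\right] = \frac{1}{\sqrt{n}}\,\E\,\norm{\mathbb{G}_n}_{\H} \leq \frac{C}{\sqrt{n}}\, J_{[\ ]}(H,\H,L_2(\mu)),
\]
where $J_{[\ ]}(\delta,\H,L_2(\mu)) := \int_0^\delta \sqrt{1 + \log N_{[\ ]}(\epsilon,\H,L_2(\mu))}\,\mathrm{d}\epsilon$ and $C$ is universal. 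Plugging in the entropy bound gives $\sqrt{\log N_{[\ ]}(\epsilon,\H,L_2(\mu))} \lesssim \epsilon^{-d/(2\kappa)}$, so that
\[
J_{[\ ]}(H,\H,L_2(\mu)) \lesssim \int_0^H \left(1 + \epsilon^{-d/(2\kappa)}\right)\mathrm{d}\epsilon,
\]
which converges precisely when $d/(2\kappa) < 1$, i.e. $\kappa > d/2$ — exactly the hypothesis. Absorbing all constants, which depend only on $H$, $\kappa$, $\X$ and $d$, into a single $A = A((H,\kappa);(\X,d))$ concludes.

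The main obstacle I anticipate is the bookkeeping at the interface between the paper's definition of $\C^\kappa_H$ — integer smoothness with all operator-norm derivatives bounded by $H$ — and the Hölder-ball formulation in which the entropy estimate is classically stated. The crux is exploiting the \emph{convexity} of $\X$ to upgrade the bound on the $\kappa$-th derivative into a Lipschitz bound on the $(\kappa-1)$-th derivative, so that the entropy estimate applies with effective smoothness exponent $\alpha = \kappa$. Everything downstream is a routine verification that the resulting integrability threshold coincides with $\kappa > d/2$.
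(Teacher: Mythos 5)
Your proof is correct, and it rests on the same two pillars as the paper's: the polynomial bracketing-entropy bound for smooth classes, \citep[Corollary 2.7.2]{van1996weak}, and a Dudley-type entropy-integral maximal inequality, with the hypothesis $\kappa > d/2$ serving in both cases only to make the integral of $\epsilon^{-d/(2\kappa)}$ near zero converge. The one substantive difference is which maximal inequality is invoked: you go directly through the bracketing-integral bound of van der Vaart and Wellner, $\sqrt{n}\,\E\big[\sup_{h \in \H}\abs{\mu_n(h)-\mu(h)}\big] \leq C\, J_{[\,]}(H,\H,L_2(\mu))$, indexed by the \emph{population} measure $\mu$, whereas the paper combines \citep[Theorems 2.1 and 3.11]{koltchinskii2011oracle}, a symmetrization/Rademacher-complexity route whose entropy integral involves the \emph{random} covering numbers $N(\epsilon,\H,L_2(\mu_n))$ and therefore requires the extra step of dominating these by (distribution-free) bracketing numbers to remove the dependence on $\mu_n$; your route sidesteps that step, so it is marginally more direct, while the paper's route would be the more flexible one if one wanted data-dependent (e.g.\ variance-localized) refinements. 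You are also more explicit than the paper on a point it leaves implicit: $\C^{\kappa}_H(\X,\R)$ is defined via operator-norm bounds on derivatives up to order $\kappa$, while Corollary 2.7.2 is stated for H\"older balls, and your use of the convexity of $\X$ together with the mean-value inequality to upgrade the bound on the $\kappa$-th derivative into an $H$-Lipschitz bound on the $(\kappa-1)$-th derivative --- placing $\H$ inside a H\"older ball of exponent $\kappa$ and comparable radius --- is exactly the justification needed, and is precisely why the proposition assumes $\X$ convex.
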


\begin{proof} Combining \citep[Theorem 2.1]{koltchinskii2011oracle} with \citep[Theorem 3.11]{koltchinskii2011oracle}, we directly have that,
\[
    \E\left[\sup_{h \in \H} \abs{\mu_n(h)-\mu(h)}\right] \leq 2 \times \frac{c}{\sqrt{n}} \E \int^{2 \sigma_n}_0 \sqrt{\log N(\epsilon, \H, L_2(\mu_n))}d\epsilon,
\]
where $c>0$ is some constant and $\sigma_n := \sup_{h \in \H} \mu_n(h^2)$. By definition of $\H$, it follows that $\sigma_n \leq H^2$. Besides,  we can upper bound the covering number in the right term by the bracketing number $N_{[\ ]}(2\epsilon, \H, L_2(\mu_n))$ (see \citep[page 84]{van1996weak}). In addition, according to \citep[Corollary 2.7.2]{van1996weak}, there exists a constant $\rho = \rho((H,\kappa);(\X,d))>0$ such that,
\[
    \log N_{[\ ]}(2\epsilon, \H, L_2(\mu_n)) \leq \rho (2\epsilon)^{-d/\kappa}.
\]
Note that the right term does not depend on $\mu_n$. All in all,
\[
    \E\left[\sup_{h \in \H} |\mu_n(h)-\mu(h)|\right] \leq \frac{2c}{\sqrt{n}} \E \int^{2 H^2}_0 \sqrt{\rho (2\epsilon)^{-d/\kappa}}d\epsilon.
\]
The integral is finite as $\kappa > d/2$. Consequently, the upper bound defines a constant $A = A((H,\kappa);(\X,d))$. This concludes the proof.
\end{proof}
Remark that the convexity assumption on the compact domain $\X$ is not restrictive, as it suffices to extend the probability measure $\mu$ on the convex hull of $\X$.

\subsection{Frechet derivative}\label{sec:frechet}

The proof of Proposition~\ref{prop:global} requires bounding the Frechet derivatives of arbitrary high orders of composite functions. We rely on the generalization of Faà di Bruno's formula proposed by \cite{clark2013faa} to carry out the computation.

Let $F : \R^{d_2} \to \R^{d_3}$ and $G : \R^{d_1} \to \R^{d_2}$ be two differentiable functions up to order $k \geq 1$. Denote by $\Omega(k)$ the set of partitions of $\{1,\ldots,k\}$, and write $\abs{\cdot}$ for the cardinality of a set. For any $\delta := (\delta_1,\ldots,\delta_k) \in (\R^{d_1})^k$, $x \in \R^{d_1}$, and $\omega := \{\omega_1,\ldots,\omega_{|\omega|}\} \in \Omega(k)$, we define $\delta^G_{\omega_i}(x) := G^{(|\omega_i|)}\left[ (\delta_j)_{j \in \omega_i}\right]$ for every $1 \leq i \leq |\omega|$. Then, according to \citep[Theorem 2]{clark2013faa},
\begin{equation}\label{eq:chain}
    (F \circ G)^{(k)}(x)[\delta_1,\ldots,\delta_k] = \sum_{\omega \in \Omega(k)} F^{(|\omega|)}(G(x))\left[\delta^G_{\omega_1}(x),\ldots,\delta^G_{\omega_{|\omega|}}(x)\right].
\end{equation}
This results implies a chain rule on the operator norms of derivatives of composite functions, which will greatly simplify the computations of later proofs.
\begin{proposition}\label{prop:chain_bound}
Let $F : \R^{d_2} \to \R^{d_3}$ and $G : \R^{d_1} \to \R^{d_2}$ be two differentiable functions up to order $k \geq 1$. Then, for any $x \in \R^{d_1}$,
$$
    \norm{\big(F \circ G\big)^{(k)}(x)}_{op} \leq \sum_{\omega \in \Omega(k)}  \norm{F^{(|\omega|)}\big(G(x)\big)}_{op} \times \prod_{1 \leq i \leq |\omega|} \norm{G^{(|\omega_i|)}(x)}_{op}.
$$
\end{proposition}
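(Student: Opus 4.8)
The plan is to start directly from the Faà di Bruno formula \eqref{eq:chain} of \cite{clark2013faa}, which expresses the $k$-th derivative of the composition $F \circ G$ as a sum indexed by the partitions $\omega \in \Omega(k)$ of $\{1,\ldots,k\}$. The whole statement is essentially an exercise in taking operator norms of both sides and bounding the resulting multilinear expression term by term, so I do not expect a genuine obstacle; the work is to bound each summand cleanly.

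First I would fix $x \in \R^{d_1}$ and consider arbitrary test directions $\delta_1,\ldots,\delta_k \in \R^{d_1}$ with $\norm{\delta_i} \leq 1$, since by definition
$$
\norm{(F \circ G)^{(k)}(x)}_{op} = \sup_{\norm{\delta_i} \leq 1} \norm{(F \circ G)^{(k)}(x)[\delta_1,\ldots,\delta_k]}.
$$
Applying \eqref{eq:chain} and the triangle inequality moves the norm inside the sum over $\omega$, leaving me to bound each term $\norm{F^{(|\omega|)}(G(x))[\delta^G_{\omega_1}(x),\ldots,\delta^G_{\omega_{|\omega|}}(x)]}$. By the definition of the operator norm of the $|\omega|$-multilinear map $F^{(|\omega|)}(G(x))$, this term is at most $\norm{F^{(|\omega|)}(G(x))}_{op}$ times the product over $1 \leq i \leq |\omega|$ of $\norm{\delta^G_{\omega_i}(x)}$.

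The remaining step is to control each factor $\norm{\delta^G_{\omega_i}(x)} = \norm{G^{(|\omega_i|)}(x)[(\delta_j)_{j \in \omega_i}]}$. Since each $\delta_j$ has norm at most $1$, the definition of $\norm{G^{(|\omega_i|)}(x)}_{op}$ gives immediately $\norm{\delta^G_{\omega_i}(x)} \leq \norm{G^{(|\omega_i|)}(x)}_{op}$. Substituting this bound into the product and recalling that the bound holds uniformly over all admissible $\delta_i$, I can take the supremum on the left-hand side and obtain exactly
$$
\norm{(F \circ G)^{(k)}(x)}_{op} \leq \sum_{\omega \in \Omega(k)} \norm{F^{(|\omega|)}(G(x))}_{op} \prod_{1 \leq i \leq |\omega|} \norm{G^{(|\omega_i|)}(x)}_{op},
$$
which is the claim. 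The only point requiring mild care is bookkeeping: making sure the partition $\omega = \{\omega_1,\ldots,\omega_{|\omega|}\}$ is used consistently so that $|\omega|$ counts the blocks (the arity of the $F$-derivative) while $|\omega_i|$ counts the size of block $i$ (the arity of the corresponding $G$-derivative), and that the $\delta_j$ are correctly distributed across blocks. There is no analytic difficulty here — the result is a purely algebraic consequence of submultiplicativity of operator norms applied blockwise to the exact chain-rule identity.
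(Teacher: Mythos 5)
Your proposal is correct and follows essentially the same route as the paper's proof: both start from the Faà di Bruno identity \eqref{eq:chain}, apply the triangle inequality over partitions $\omega \in \Omega(k)$, bound each summand via the operator norm of the multilinear map $F^{(|\omega|)}(G(x))$ (using multilinear homogeneity), and finish by bounding $\norm{\delta^G_{\omega_i}(x)}$ by $\norm{G^{(|\omega_i|)}(x)}_{op}$ since the test directions are unit vectors. No gaps; your bookkeeping of blocks versus block sizes matches the paper's argument exactly.
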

\begin{proof} According to the triangle inequality and \eqref{eq:chain}
\[
\norm{(F \circ G)^{(k)}(x)}_{op} \leq \sum_{\omega \in \Omega(k)} \sup_{\norm{\delta_1},\ldots,\norm{\delta_k} \leq 1} \norm{F^{(|\omega|)}(G(x))\left[\delta^G_{\omega_1}(x),\ldots,\delta^G_{\omega_{|\omega|}}(x)\right]}.
\]
Then, we can bound the right term of this inequality by,
\[
    \sum_{\omega \in \Omega(k)} \norm{F^{(|\omega|)}(G(x))}_{op} \times \prod_{1 \leq i \leq |\omega|} \sup_{\norm{\delta_1},\ldots,\norm{\delta_k} \leq 1} \norm{\delta^G_{\omega_i}(x)}_{op}.
\]
In addition, note that for any $x \in \R^{d_1}$,
\[
    \sup_{\norm{\delta_1},\ldots,\norm{\delta_k} \leq 1} \norm{\delta^G_{\omega_i}(x)} \leq \norm{G^{(|\omega_i|)}(x)}_{op}.
\]
Therefore,
\[
    \norm{\big(F \circ G\big)^{(k)}(x)}_{op} \leq \sum_{\omega \in \Omega(k)}  \norm{F^{(|\omega|)}\big(G(x)\big)}_{op} \times \prod_{1 \leq i \leq |\omega|} \norm{G^{(|\omega_i|)}(x)}_{op}.
\]
\end{proof}

\section{Proofs of the main results}\label{sec:proofs}

This sections details all the mathematical proofs of the paper.

\begin{proof}[Proof of Lemma 3.2]
Let us start with a preliminary remark. For any $v \in L^2_V$, it follows from Assumption~\ref{hyp:embedded} that $\int^1_0 \norm{v_t}_{p,\infty} \mathrm{d}t \leq c_V \int^1_0 \norm{v_t}_{V} \mathrm{d}t$. Besides, by Cauchy-Schwarz inequality $\int^1_0 \norm{v_t}_{V} \mathrm{d}t \leq \norm{v}_{L^2_V}$
, leading to $\int^1_0 \norm{v_t}_{p,\infty} \mathrm{d}t \leq c_V \norm{v}_{L^2_V}$.

We now turn to the proof. Recall that by definition $\phi^v_t(x) = x + \int^t_0 v_s \circ \phi^v_s(x) \mathrm{d}s$. Consequently, by the triangle inequality we have for any compact set $K \subset \R^d$ that
\[
\sup_{t \in [0,1],x \in K} \norm{\phi^v_t(x)} \leq \sup_{x \in K} \norm{x} + \int^1_0\norm{v_s}_{\infty} \mathrm{d}s \leq \sup_{x \in K} \norm{x} + c_V \norm{v}_{L^2_V}.
\]
Therefore,
\[
\sup_{v \in L^2_{V,M}, t \in [0,1],x \in K} \norm{\phi^v_t(x)} \leq \sup_{x \in K} \norm{x} + c_V M.
\]
Moreover, combining \citep[Theorem 5]{glaunes2005transport} with the preliminary remark, we know that for any $1 \leq k \leq p$, there exist two positive constants $c_k$ and $c'_k$ such that for any $v \in L^2_V$,
\[
    \sup_{t \in [0,1]} \norm{(\phi^v_t)^{(k)}}_{\infty} \leq c_k \exp\left( c'_k \norm{v}_{L^2_V}\right).
\]
Hence,
\[
    \sup_{v \in L^2_{V,M}, t \in [0,1]} \norm{(\phi^v_t)^{(k)}}_{\infty} \leq c_k \exp\left( c'_k M\right).
\]
Then, setting
\[
R((K,d);(V,p);M) := \max\left\{ \max_{1 \leq k \leq p}\{ c_k \exp(c'_k M)\}, \sup_{x \in K} \norm{x} + c_V M \right\}
\]
concludes the proof.
\end{proof}

\begin{proof}[Proof of Lemma 4.1]
Let $\mu$ and $\nu$ be probability measures on a compact set $K \subset \R^d$. In a first time, let us show that optimal potentials $(f,g) \in \C(K,\R) \times \C(K,\R)$ for $\T_{C,\varepsilon}(\mu,\nu)$ can be chosen as universally-bounded Lipschitz functions. The optimality condition on the potentials (see for instance \citep{genevay2019thesis}) can be written as,
\[
    \exp\left( -\frac{f(x)}{\varepsilon} \right) = \int_K \exp\left( \frac{g(y) - C(x,y)}{\varepsilon}\right) \mathrm{d}\nu(y).
\]
Remark that since $C$ is continuously differentiable, $f$ is therefore continuously differentiable. Differentiating both sides of this expression leads to,
\[
    \nabla f(x) = \int_K \nabla_1 C(x,y) \exp\left(\frac{f(x)+g(y)-C(x,y)}{\varepsilon}\right) \mathrm{d}\nu(y),
\]
where $\nabla_1$ denotes the gradient with respect to $x$, the first variable of $C$. Let us define $\Gamma^{f,g}_{C,\varepsilon}(x,y) := \exp\left(\frac{f(x)+g(y)-C(x,y)}{\varepsilon}\right)$. According to the primal-dual relationship \citep[Proposition 7]{genevay2019thesis}, an optimal solution $\pi$ to the primal problem has the expression,
\[
    \mathrm{d} \pi(x,y) = \Gamma^{f,g}_{C,\varepsilon}(x,y) \mathrm{d}\mu(x) \mathrm{d}\nu(y).
\]
Since by definition $\pi \in \Pi(\mu,\nu)$, we consequently obtain that $\int_K \Gamma^{f,g}_{C,\varepsilon}(x,y) \mathrm{d}\nu(y) = 1$. Therefore, 
\[
    \norm{\nabla f}_{\infty} \leq \sup_{x,y \in K} \norm{\nabla_1 C(x,y)}.
\]
A similar argument can be made for $g$. This shows that $f$ and $g$ are $\ell$-Lipschitz with $\ell = \ell((K,d);C) > 0$. Now, note that for any constant $c \in \R$, the pair $(f+c,g-c)$ is still a pair of optimal potentials. As a consequence, they can be chosen without loss of generality such that $f(x_0) = 0$ for a given $x_0 \in K$. Thus, using the Lipschitz property we get $f(x) \leq \ell \norm{x - x_0}$, hence $\norm{f}_{\infty} \leq \ell \operatorname{diam}(K)$. To bound $g$, we use \citep[Proposition 1]{genevay2019sample} which states that $\inf_{x \in K}\{f(x)-C(x,y)\} \leq g(y) \leq \sup_{x \in K}\{f(x)-C(x,y)\}$. This entails that $\norm{g}_{\infty} \leq \norm{f}_{\infty} + \sup_{x,y \in K} |C(x,y)| \leq \ell \operatorname{diam}(K) + \sup_{x,y \in K} |C(x,y)|$. All in all, there exists a constant $\ell_1 = \ell_1((K,d);C)$ such that $f$ and $g$ are $\ell_1$-bounded and $\ell_1$-Lipschitz continuous.

Analogously, one can bound the successive derivatives of $f$ and $g$ up to order $q$, the maximum order or differentiability of $C$, using \cite[Proposition 1]{genevay2019sample}. In particular, this result ensures that for any $1 \leq k \leq q$, both $\norm{f^{(k)}}_{\infty}$ and $\norm{g^{(k)}}_{\infty}$ are bounded by a polynomial in $\varepsilon^{-1}$ whose coefficients depend only on $C$ and $K$. This implies that there exists a constant $m = m((K,d);(C,q);\varepsilon)>0$ such that $f$ and $g$ belong to $\C^q_m(K,\R)$.
\end{proof}

\begin{proof}[Proof of Proposition 4.2]

Let $m >0$ and $R>0$. Set $f,g \in \C^q_m(B_R,\R)$. Note that the function $h^{f ,g}_{C,\varepsilon }$ belongs to $\C^q(B_R \times B_R, \R)$. In a first time, we do not focus on any data processing operations, and show that $h^{f ,g}_{C,\varepsilon }$ and its derivatives up to order $q$ are uniformly bounded. By definition,

$$
    h^{f,g}_{C,\varepsilon }(x,y) = f(x) + g(y) -\varepsilon \exp\left(\frac{f(x) + g(y) - C\big(x,y\big)}{\varepsilon}\right) + \varepsilon.
$$
Before going further, we define the constant
\begin{equation}\label{eq:c}
C_{\infty}(R) := \max_{0 \leq k \leq q} \left\{\sup_{(x,y) \in B_R \times B_R} \norm{C^{(k)}(x,y)}_{op}\right\}
\end{equation}
Then, using the triangle inequality and the bounds on $f,g$ and $C$ we obtain,
\[
    \norm{h^{f ,g}_{C,\varepsilon }}_{\infty} \leq \norm{f}_{\infty} + \norm{g}_{\infty} +\varepsilon \exp\left(\frac{\norm{f}_{\infty} + \norm{g}_{\infty} + C_{\infty}}{\varepsilon}\right) + \varepsilon \leq
     2m + \varepsilon \exp\left(\frac{2m + C_{\infty}}{\varepsilon} \right) + \varepsilon.
\]
Notice that the upper bound does not depend on the choice of $f$ and $g$. We prove similar bounds for arbitrary high orders of derivatives using the chain rule. We divide the problem by studying the function,
\[
\Gamma^{f,g}_{C,\varepsilon}: (x,y) \in B_R \times B_R \mapsto \exp{\frac{f(x)+g(y)-C(x,y)}{\varepsilon}},
\]
which is $\kappa$-continuously differentiable. Using Proposition~\ref{prop:chain_bound} with $F = \exp$, we obtain for any $1 \leq k \leq q$,
$$
    \norm{\big(\Gamma^{f,g}_{C,\varepsilon}\big)^{(k)}(x,y)}_{op} \leq \abs{\Gamma^{f,g}_{C,\varepsilon}(x,y)} \sum_{\omega \in \Omega(k)} \prod_{1 \leq i \leq |\omega|} \varepsilon^{-1} \norm{f^{(|\omega_i|)}(x) + g^{(|\omega_i|)}(y) - C^{(|\omega_i|)}(x,y)}_{op},
$$
Then,
\begin{equation}\label{eq:bound_gamma}
     \norm{\left(\Gamma^{f,g}_{C,\varepsilon}\right)^{(k)}}_{\infty} \leq \exp\left(\frac{2m + C_{\infty}(R)}{\varepsilon}\right) \sum_{\omega \in \Omega(k)} \varepsilon^{-|\omega|} (2m + C_{\infty}(R))^{|\omega|}.
\end{equation}
We now turn back to $h^{f,g}_{C,\varepsilon}$. Since $\left(h^{f,g}_{C,\varepsilon}\right)^{(k)} = f^{(k)} + g^{(k)} - \varepsilon \left(\Gamma^{f,g}_{C,\varepsilon}\right)^{(k)}$ we finally have
\[
     \norm{\big(h^{f,g}_{C,\varepsilon}\big)^{(k)}}_{\infty} \leq 2m + \exp\left(\frac{2m + C_{\infty}(R)}{\varepsilon}\right) \sum_{\omega \in \Omega(k)} \varepsilon^{-|\omega|+1} (2m + C_{\infty}(R))^{|\omega|}.
\]
By defining,
\[
    H_0(m;R;(C,q);\varepsilon) := (2m+\varepsilon) + \varepsilon \exp\left(\frac{2m + C_{\infty}(R)}{\varepsilon}\right)\\ \times \max_{0 \leq k \leq q} \left\{ \sum_{\omega \in \Omega(k)} \varepsilon^{-|\omega|} (2m + C_{\infty}(R))^{|\omega|} \right\},
\]
we conclude that
$$
    \norm{\left(h^{f,g}_{C,\varepsilon}\right)^{(k)}}_{q,\infty} \leq H_0.
$$
    
We now include data processing transformations. Set $T_1,T_2 \in \C^p_R(\X,\R^d)$. It follows from the regularity of $h^{f,g}_{C,\varepsilon}$ that $h^{f ,g}_{C,\varepsilon } \circ (T_1,T_2) \in \C^{\kappa}(\X \times \X,\R)$. Since $T_1(x), T_2(y) \in B_R$, and because $h^{f ,g}_{C,\varepsilon }$ is bounded by $H_0$ on $B_R \times B_R$, the function $h^{f ,g}_{C,\varepsilon } \circ (T_1,T_2)$ is bounded on $\X \times \X$ regardless of the choice of $f,g,T_1$ and $T_2$. Here again, we use the chain rule to build higher-order bounds. From Proposition~\ref{prop:chain_bound} applied with $F = h^{f ,g}_{C,\varepsilon }$ and $G = (T_1,T_2)$ it follows that for any $1 \leq k \leq \kappa$,
\begin{equation}\label{eq:ineq}
    \norm{\big(h^{f ,g}_{C,\varepsilon } \circ (T_1,T_2)\big)^{(k)}(x,y)}_{op} \leq \sum_{\omega \in \Omega(k)} \norm{\big(h^{f ,g}_{C,\varepsilon }\big)^{(|\omega|)} \circ (T_1,T_2)(x,y)}_{op}\\ \times \prod_{1 \leq i \leq |\omega|} \norm{(T_1,T_2)^{(|\omega_i|)}(x,y)}_{op}.
\end{equation}
Then, remark that for any $1 \leq k \leq \kappa$,
\begin{align*}
    \norm{(T_1,T_2)^{(k)}(x,y)}^2_{op} &= \sup_{\norm{\delta_i} \leq 1} \norm{(T_1,T_2)^{(k)}(x,y)(\delta_1,\ldots,\delta_k))}^2\\
    &\leq \sup_{\norm{\delta_i} \leq 1} \norm{T_1^{(k)}(x)(\delta_1,\ldots,\delta_k)}^2 + \sup_{\norm{\delta_i} \leq 1} \norm{T_2^{(k)}(y)(\delta_1,\ldots,\delta_k)}^2\\
    &= \norm{T_1^{(k)}(x)}^2_{op} + \norm{T_2^{(k)}(y)}^2_{op}\\
    &\leq 2 R^2.
\end{align*}
We can therefore bound the right term of \eqref{eq:ineq}, leading to
$$
    \norm{\big(h^{f ,g}_{C,\varepsilon } \circ (T_1,T_2)\big)^{(k)}}_{\infty} \leq \sum_{\omega \in \Omega(k)} H_0 \times \prod_{1 \leq i \leq |\omega|} \sqrt{2}R = H_0 \sum_{\omega \in \Omega(k)} (\sqrt{2}R)^{|\omega|}.
$$
We conclude by defining
\[
    H(m;R;(C,q);\varepsilon,p) := H_0(m;R;(C,q);\varepsilon) \times \max_{0 \leq k \leq \kappa}\left\{ \sum_{\omega \in \Omega(k)} (\sqrt{2}R)^{|\omega|} \right\},
\]
which leads to,
\[
    \norm{h^{f ,g}_{C,\varepsilon } \circ (T_1,T_2)}_{\kappa,\infty} \leq H.
\]

\end{proof}

\begin{proof}[Proof of Proposition 5.1]
Let $\{v^n\}_{n \in \N}$ be a sequence of vector fields in $L^2_V$ weakly converging to some $v \in L^2_V$. \citep[Proposition 4]{glaunes2005transport} implies that for every $x \in \X$,

\begin{equation}\label{eq:pointwise}
    \abs{\phi^{v^n}_1(x)-\phi^v_1(x)} \xrightarrow[n \to +\infty]{} 0.
\end{equation}
Next, we aim at showing that this entails ${\phi^{v^n}_1}_\sharp \alpha \xrightarrow[n \to +\infty]{w} {\phi^v_1}_\sharp \alpha$, where $w$ denotes the weak* convergence of \emph{probability measures}. Firstly, note that as a consequence of the uniform-boundedness principle \citep[Theorem 2.5]{rudin1991functional}, the weak convergence of $\{v^n\}_{n \in \N}$ to $v$ implies that there exists $M>0$ such that $\{v^n\}_{n \in \N} \cup \{v\} \subset L^2_{V,M}$. Hence, according Lemma~\ref{lm:diffeo}, there exists some $R = R((\X,d);(V,p);M)>0$ such that the measures $\{{\phi^{v^n}_1}_\sharp \alpha\}_{n \in \N}$, ${\phi^v_1}_\sharp \alpha$, and $\beta$ are all probabilities on $B_R$. Secondly, recall that showing the weak* convergence of amounts to check that for any bounded test functions $h \in \C(B_R,\R)$ we have that $\int h  \mathrm{d}({\phi^{v^n}_1}_\sharp \alpha) \xrightarrow[n \to +\infty]{} \int h \mathrm{d}({\phi^v_1}_\sharp \alpha)$. Let $h \in \C(B_R,\R)$ be a bounded function and use the push-forward change-of-variable formula to write $\int h \mathrm{d}({\phi^{v^n}_1}_\sharp \alpha) = \int (h \circ {\phi^{v^n}_1}) \mathrm{d} \alpha$. By continuity of $h$ and according to \eqref{eq:pointwise}, the sequence of functions $\{h \circ \phi^{v^n}_1\}_{n \in \N}$ converges point-wise to $h \circ \phi^{v}_1$. In addition, as $h$ is bounded, this sequence is dominated by a constant. We can therefore apply the dominated convergence theorem to obtain that ${\phi^{v^n}_1}_\sharp \alpha \xrightarrow[n \to +\infty]{w} {\phi^v_1}_\sharp \alpha$.

We conclude the proof using \citep[Proposition 13]{feydy2019interpolating}, which states that $\T_{C,\varepsilon}$ (and consequently $S_{C,\varepsilon}$) is weak* continuous w.r.t. each of its input measures, provided that the ground cost function $C$ is Lipschitz on their compact domains. This condition readily follows from the continuity of the derivative of $C$ on the compact set $B_R \times B_R$. Therefore, $v \mapsto S_{C,\varepsilon}({\phi^v_1}_\sharp \alpha,\beta)$ is weakly continuous on $L^2_V$. If additionally $e^{-\frac{C}{\varepsilon}}$ defines a positive universal kernel, then $v \mapsto S_{C,\varepsilon}({\phi^v_1}_\sharp \alpha,\beta)$ is non negative according to \citep[Theorem 1]{feydy2019interpolating}, which implies through \citep[Theorem 7]{glaunes2005transport} that $J_\lambda$ for $\Lambda = S_{C,\varepsilon}$ admits minimizers.
\end{proof}

\begin{proof}[Proof of Proposition 5.3]

Let $R > 0$. In a first time, we demonstrate the following Glivenko-Cantelli theorem $(i)$:
\[
    \sup_{T_1,T_2 \in \C^p_R(\X,\R^d)} \abs{\T_{C,\varepsilon}({T_1}_\sharp \alpha_n, {T_2}_\sharp \beta_n) - \T_{C,\varepsilon}({T_1}_\sharp \alpha, {T_2}_\sharp \beta)} \xrightarrow[n \to +\infty]{a.s.} 0.
\]
In a second time, when $\kappa = \min\{p,q\} \geq d$, we show the following rate of convergence $(ii)$:
\[
    \E \sup_{T_1,T_2 \in \C^p_R(\X,\R^d)} \abs{\T_{C,\varepsilon}({T_1}_\sharp \alpha_n,{T_2}_\sharp \beta_n)-\T_{C,\varepsilon}({T_1}_\sharp \alpha,{T_2}_\sharp \beta)} \leq \frac{A}{\sqrt{n}},
\]
where $A>0$ is a constant. In both cases, the key idea of the proof is to note that the quantity
\[
    \sup_{T_1,T_2 \in \C^p_R(\X,\R^d)} \abs{\T_{C,\varepsilon}({T_1}_\sharp \alpha_n, {T_2}_\sharp \beta_n) - \T_{C,\varepsilon}({T_1}_\sharp \alpha, {T_2}_\sharp \beta)}
\]
is the supremum of a centered empirical process indexed by a class of smooth functions, and as such can be controlled via classical results from empirical process theory (see Section~\ref{sec:empirical_processes}).

Let $T_1$ and $T_2$ be two arbitrary functions in $\C^p_R(\X,\R^d)$. By definition,  the image sets $T_1(\X)$ and $T_2(\X)$ are contained in $B_R$. Thus, using the dual formulation, the entropic transportation costs can be written as,
\begin{align*}
    \T_{C,\varepsilon}({T_1}_\sharp \alpha_n,{T_2}_\sharp \beta_n) &= \sup_{f,g \in \mathcal{C}(B_R,\R)} ({T_1}_\sharp \alpha_n \otimes {T_2}_\sharp \beta_n)(h^{f,g}_{C,\varepsilon}),\\
    \T_{C,\varepsilon}({T_1}_\sharp \alpha,{T_2}_\sharp \beta) &= \sup_{f,g \in \mathcal{C}(B_R,\R)} ({T_1}_\sharp \alpha \otimes {T_2}_\sharp \beta)(h^{f,g}_{C,\varepsilon}).
\end{align*}
We apply Lemma~\ref{lm:dual} with $\mu = {T_1}_\sharp \alpha$ and $\nu = {T_2}_\sharp \beta$ which are probability measures on $B_R$. This implies that there exists a constant $m = m(B_R;(C,q),\varepsilon)>0$ such that
\begin{align*}
    \T_{C,\varepsilon}({T_1}_\sharp \alpha_n,{T_2}_\sharp \beta_n) &= \sup_{f,g \in \C^q_m(B_R,\R)} ({T_1}_\sharp \alpha_n \otimes {T_2}_\sharp \beta_n)(h^{f,g}_{C,\varepsilon})\\ &= \sup_{f,g \in \C^q_m(B_R,\R)} (\alpha_n \otimes \beta_n)(h^{f ,g}_{C,\varepsilon } \circ (T_1,T_2)),
\end{align*}
where we used the push-forward change-of-variable formula. Proceeding similarly with the empirical measures we get,
\begin{align*}
    \T_{C,\varepsilon}({T_1}_\sharp \alpha,{T_2}_\sharp \beta) &= \sup_{f,g \in \C^q_m(B_R,\R)} ({T_1}_\sharp \alpha \otimes {T_2}_\sharp \beta)(h^{f,g}_{C,\varepsilon})\\ &= \sup_{f,g \in \C^q_m(B_R,\R)} (\alpha \otimes \beta)(h^{f ,g}_{C,\varepsilon } \circ (T_1,T_2)),
\end{align*}
Then, by using a classical error decomposition, we can control the difference between these two terms as follows,
\begin{multline*}
|\T_{C,\varepsilon}({T_1}_\sharp \alpha_n,{T_2}_\sharp \beta_n)-\T_{C,\varepsilon}({T_1}_\sharp \alpha,{T_2}_\sharp\beta)| \leq\\ \sup_{f,g \in \C^q_m(B_R,\R)} |(\alpha_n \otimes \beta_n)(h^{f ,g}_{C,\varepsilon } \circ (T_1,T_2)) - (\alpha \otimes \beta)(h^{f ,g}_{C,\varepsilon } \circ (T_1,T_2))|.
\end{multline*}
After taking the supremum in $\C^p_R(\X,\R^d)$ on both sides of this inequality we get,
\begin{multline*}
\sup_{T_1,T_2 \in \C^p_R(\X,\R^d)} \abs{\T_{C,\varepsilon}({T_1}_\sharp \alpha_n,{T_2}_\sharp \beta_n)-\T_{C,\varepsilon}({T_1}_\sharp \alpha,{T_2}_\sharp \beta)} \leq\\ \sup_{T_1,T_2 \in \C^p_R(\X,\R^d) ; f,g \in \C^q_m(B_R,\R)} \abs{(\alpha_n \otimes \beta_n)(h^{f ,g}_{C,\varepsilon } \circ (T_1,T_2)) - (\alpha \otimes \beta)(h^{f ,g}_{C,\varepsilon } \circ (T_1,T_2))}.
\end{multline*}
The right term of this inequality can be seen as a centered empirical process indexed by the class of functions $\{h^{f ,g}_{C,\varepsilon } \circ (T_1,T_2)\ |\ T_1,T_2 \in \C^p_R(\X,\R^d) ; f,g \in \C^q_m(B_R,\R)\}$. Empirical process theory provides convergence guarantees when the index class is regular enough. Besides, we know from Proposition~\ref{prop:global} that there exists a constant $H := H(R;(C,q);\varepsilon,p) > 0$ such that this class is included in $\C^{\kappa}_{H}(\X \times \X, \R)$. Therefore,

$$
\sup_{T_1,T_2 \in \C^p_R(\X,\R^d)} |\T_{C,\varepsilon}({T_1}_\sharp \alpha_n,{T_2}_\sharp \beta_n)-\T_{C,\varepsilon}({T_1}_\sharp \alpha,{T_2}_\sharp\beta)| \leq \sup_{h \in \C^{\kappa}_{H}(\X \times \X, \R)} |(\alpha_n \otimes \beta_n)(h) - (\alpha \otimes \beta)(h)|.
$$

Let us set $\H := \C^{\kappa}_{H}(\X \times \X, \R)$. According to  \citep[Corollary 2.7.2]{van1996weak} and \citep[Theorem 2.4.1]{van1996weak}, $\H$ is a so-called $(\alpha \otimes \beta)$-\emph{Glivenko-Cantelli} class of functions, meaning that
$$
\sup_{h \in \H} |(\alpha_n \otimes \beta_n)(h) - (\alpha \otimes \beta)(h)| \xrightarrow[n \to +\infty]{a.s.} 0.
$$
This implies $(i)$. In addition, by Proposition~\ref{prop:upper}, if $\kappa \geq (2d) / 2$ then there exists a positive constant $A := A(R;(C,q);\varepsilon;(\X,d);p)$ such that,
$$
    \E \left[ \sup_{h \in \H} |(\alpha_n \otimes \beta_n)(h)-(\alpha \otimes \beta)(h)| \right] \leq \frac{A}{\sqrt{n}}.
$$
This proves $(ii)$.
\end{proof}

Before proving Theorem~\ref{thm:main}, we need the next intermediary result:
\begin{lemma}\label{lm:bounded}
Under the assumptions of Theorem~\ref{thm:main}, there exists a positive constant $M = M(\lambda;(\X,d);(C,q);\varepsilon)$ such that
$$
    \left\{\bigcup_{n \in \N} \argmin_{v \in L^2_V} J_{\lambda,n}(v) \right\} \cup \argmin_{v \in L^2_V} J_{\lambda}(v) \subseteq L^2_{V,M}.
$$
\end{lemma}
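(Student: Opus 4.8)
The plan is to bound the $L^2_V$-norm of every minimizer by comparing the objective value at the minimizer with its value at the null vector field $v = 0$. Since the flow equation \eqref{eq:flow} with $v = 0$ yields $\phi^0_t = \mathrm{id}$ for every $t \in [0,1]$, we have ${\phi^0_1}_\sharp \alpha_n = \alpha_n$ and hence $J_{\lambda,n}(0) = S_{C,\varepsilon}(\alpha_n,\beta_n)$, and likewise $J_\lambda(0) = S_{C,\varepsilon}(\alpha,\beta)$. Under the assumptions of Theorem~\ref{thm:main} the kernel $e^{-C/\varepsilon}$ is positive universal, so by \citep[Theorem 1]{feydy2019interpolating} the Sinkhorn divergence is non negative. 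Consequently, for any minimizer $v^n$ of $J_{\lambda,n}$,
\[
\lambda \norm{v^n}^2_{L^2_V} \leq S_{C,\varepsilon}({\phi^{v^n}_1}_\sharp \alpha_n,\beta_n) + \lambda \norm{v^n}^2_{L^2_V} = J_{\lambda,n}(v^n) \leq J_{\lambda,n}(0) = S_{C,\varepsilon}(\alpha_n,\beta_n),
\]
and identically $\lambda \norm{v^*}^2_{L^2_V} \leq S_{C,\varepsilon}(\alpha,\beta)$ for any minimizer $v^*$ of $J_\lambda$.

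It then remains to bound $S_{C,\varepsilon}(\alpha_n,\beta_n)$ by a constant independent of $n$. First I would observe that, for any probability measures $\mu,\nu$ on the compact set $\X$, feeding the product coupling $\mu \otimes \nu \in \Pi(\mu,\nu)$ into the primal problem \eqref{eq:primal} annihilates the Kullback-Leibler term, so that
\[
\T_{C,\varepsilon}(\mu,\nu) \leq \int_{\X \times \X} C \, \mathrm{d}(\mu \otimes \nu) \leq C_{\max},
\]
where $C_{\max} := \sup_{(x,y) \in \X \times \X} C(x,y) < \infty$ by continuity of $C$ on the compact $\X \times \X$. Next, since $C \geq 0$ and the Kullback-Leibler divergence is non negative, the diagonal terms satisfy $\T_{C,\varepsilon}(\mu,\mu) \geq 0$; discarding them gives
\[
S_{C,\varepsilon}(\mu,\nu) = \T_{C,\varepsilon}(\mu,\nu) - \frac{1}{2}\T_{C,\varepsilon}(\mu,\mu) - \frac{1}{2}\T_{C,\varepsilon}(\nu,\nu) \leq \T_{C,\varepsilon}(\mu,\nu) \leq C_{\max}.
\]
Crucially, this bound is uniform over $n$ because $\alpha_n$, $\beta_n$, $\alpha$ and $\beta$ are all supported in the fixed compact $\X$, so a single constant $C_{\max}$ controls the fidelity loss simultaneously for every sample size.

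Combining the two estimates, every minimizer obeys $\norm{v^n}_{L^2_V} \leq \sqrt{C_{\max}/\lambda}$ and $\norm{v^*}_{L^2_V} \leq \sqrt{C_{\max}/\lambda}$, so the claim holds with $M := \sqrt{C_{\max}/\lambda}$, which depends only on $\lambda$ and on $\X$ and $C$ through $C_{\max}$ (and is trivially permitted to depend on $\varepsilon$). I expect the argument to be essentially routine; the only point requiring care is the uniform-in-$n$ control of the data-fidelity term evaluated at $v = 0$. The conceivable obstacle is that the debiasing subtraction of the diagonal terms in $S_{C,\varepsilon}$ might spoil the upper bound, but this is dispatched by noting that we only need their non negativity, not any sharp estimate. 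The whole scheme hinges on the supports never escaping the fixed compact $\X$, which is precisely what makes the bound uniform in the sample size.
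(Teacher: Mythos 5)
Your proof is correct, and its opening step---comparing any minimizer with the null vector field, using $\phi^0_1=\mathrm{id}$ and the non negativity of $S_{C,\varepsilon}$ under the positive-universal-kernel assumption to get $\lambda\norm{v^n}^2_{L^2_V}\leq S_{C,\varepsilon}(\alpha_n,\beta_n)$---is exactly the paper's. Where you genuinely diverge is in the uniform-in-$n$ bound on $S_{C,\varepsilon}(\alpha_n,\beta_n)$. The paper stays in the \emph{dual} formulation: it invokes Lemma~\ref{lm:dual} to restrict the supremum to potentials $f,g\in\C^q_m(\X,\R)$, bounds the global potential pointwise by $h^{f,g}_{C,\varepsilon}(x,y)\leq 2m+\varepsilon$ (the exponential term being subtracted), and concludes $S_{C,\varepsilon}(\alpha_n,\beta_n)\leq 4m+2\varepsilon$ via the triangle inequality. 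You instead work in the \emph{primal}: feeding the independent coupling $\mu\otimes\nu$ into \eqref{eq:primal} kills the Kullback--Leibler term and gives $\T_{C,\varepsilon}(\mu,\nu)\leq C_{\max}$, while the diagonal terms are discarded simply because $C\geq 0$ and $\mathrm{KL}\geq 0$ make them non negative. Your route is more elementary---it needs only boundedness of $C$ on the compact $\X\times\X$, not the regularity theory of optimal potentials---and it yields a constant $M=\sqrt{C_{\max}/\lambda}$ that is independent of $\varepsilon$, whereas the paper's $m$ degrades as $\varepsilon\to 0$ (it is controlled by a polynomial in $\varepsilon^{-1}$). What the paper's dual route buys is mainly economy within its own architecture: Lemma~\ref{lm:dual} and the global-potential bookkeeping are needed anyway for Proposition~\ref{prop:key_prop}, so the paper reuses that machinery rather than introducing a separate primal argument. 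As a standalone proof of this lemma, yours is leaner and slightly sharper in its parameter dependence.
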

\begin{proof}
The proof generalizes an argument made for a squared MMD in \citep[Theorem 16]{glaunes2005transport} to a Sinkhorn divergence. Let $n \in \N$ and set $v^n$ a minimizer of $J_{\lambda,n}$. Notice that the vector flow uniformly equal to zero generates the identity function, that is $\phi^0_t = I$ for any $t \in [0,1]$. Thus, by definition of a minimizer and by non negativity of the Sinkhorn divergence, we readily have that
$$
\lambda \norm{v^{n}}^2_{L^2_V} \leq J_{\lambda,n}(v^{n}) \leq J_{\lambda,n}(0) = S_{C,\varepsilon}(\alpha_n,\beta_n).
$$
Therefore, $\norm{v^{n}}^2_{L^2_V} \leq \lambda^{-1} S_{C,\varepsilon}(\alpha_n,\beta_n)$. To conclude, let us bound uniformly the right-term of this inequality. According to Lemma~\ref{lm:dual} applied with $\alpha_n$ and $\beta_n$ there exists a constant $m = m((\X,d);(C,q);\varepsilon)$ such that,
$$
\T_{C,\varepsilon}(\alpha_n,\beta_n) = \sup_{f,g \in \C^q_m(\X,R)} (\alpha_n \otimes \beta_n)\left(h^{f,g}_{C,\varepsilon}\right).
$$
Moreover, for any $x,y \in \X$,
$$
    h^{f,g}_{C,\varepsilon}(x,y) = f(x) + g(x) - \varepsilon e^{\frac{f(x)+g(y)-C(x,y)}{\varepsilon}} + \varepsilon \leq m + m + 0 + \varepsilon.
$$
Thus,
$$
    \T_{C,\varepsilon}(\alpha_n,\beta_n) \leq 2m + \varepsilon.
$$
The same bound holds for the two auto-correlation terms of the Sinkhorn divergence, namely $\T_{C,\varepsilon}(\alpha_n,\alpha_n)$ and $\T_{C,\varepsilon}(\beta_n,\beta_n)$. Therefore, the triangle inequality leads to
$$
S_{C,\varepsilon}(\alpha_n,\beta_n) \leq 4m+2 \varepsilon.
$$
Consequently,
$$
\norm{v^{n}}^2_{L^2_V} \leq \frac{4m+2 \varepsilon}{\lambda}.
$$
To conclude, we set $M(\lambda;(\X,d);(C,q);\varepsilon) := \sqrt{\frac{4m+2\varepsilon}{\lambda}}$. Note that this bound does not depend on $n$. As such, the minima $\{v^{n}\}_{n \in \N}$ all belong to $L^2_{V,M}$. A similar reasoning for $v^*$ a minimizer of $J_\lambda$ shows that all the minimizers of $J_\lambda$ also belong to $L^2_{V,M}$.
\end{proof}

\begin{proof}[Proof of Theorem 5.2] Let $M>0$ be arbitrary (for now). Set $v \in L^2_{V,M}$ and compute
\[
|J_{\lambda,n}(v)-J_\lambda(v)| = |S_{C,\varepsilon}({\phi^v_1}_\sharp \alpha_n,\beta_n)-S_{C,\varepsilon}({\phi^v_1}_\sharp \alpha,\beta)|.
\]
According to Lemma~\ref{lm:diffeo}, there exists a constant $R = R((\X,d);(V,p);M)$ such that for any $\phi \in \{ \phi^v_t \mid t \in [0,1], v \in L^2_{V,M} \}$, the restriction $\phi\restr{\X}$ and the identity function $I$ both belong to $\C^p_R(\X,\R^d)$. This leads to
\begin{equation}\label{eq:step}
\sup_{v \in L^2_{V,M} }|J_{\lambda,n}(v)-J_\lambda(v)| \leq \sup_{T_1,T_2 \in \C^p_{R}(\X,\R^d)} |S_{C,\varepsilon}({T_1}_\sharp \alpha_n,{T_2}_\sharp \beta_n)-S_{C,\varepsilon}({T_1}_\sharp \alpha,{T_2}_\sharp \beta)|.
\end{equation}

From here, let us demonstrate the convergence of the minima, that is item $(i)$. According to Lemma~\ref{lm:bounded}, there exists $M = M(\lambda;(\X,d);(C,q);\varepsilon)>0$ such that all the minimizers of $J_{\lambda,n}$ belong to $L^2_{V,M}$. Next, we show that any weakly-converging subsequences of $\{v^n\}_{n \in \N}$ tend to a minimizer of $J_\lambda$. Set $v^*$ a minimizer of $J_\lambda$, and let $\{u^n\}_{n \in \N}$ be a subsequence with limit $u$. First, let's show that $\lim_{n \to +\infty} J_{\lambda,n}(u^n) = J_\lambda(u)$. By the triangle inequality, $\abs{J_{\lambda,n}(u^n) - J_\lambda(u)} \leq \abs{J_{\lambda,n}(u^n) - J_\lambda(u^n)} + \abs{J_\lambda(u^n) - J_\lambda(u)}$. The first term tends to zero by Proposition~\ref{prop:key_prop} and \eqref{eq:step} specified with $M(\lambda;(\X,d);(C,q);\varepsilon)$, while the second term tends to zero according to Proposition~\ref{prop:J_cont} which ensures the weak continuity of $J_\lambda$. Second, note that the optimality condition entails that $J_{\lambda,n}(u^n) \leq J_{\lambda,n}(v^*)$, and that $\lim_{n \to +\infty} J_{\lambda,n}(v^*) = J_\lambda(v^*)$. Then, at the limit $J_\lambda(u) \leq J_\lambda(v^*)$, meaning that $u$ is a minimizer of $J_\lambda$. Therefore, any weakly-converging subsequence $\{u^n\}_{n \in \N}$ of $\{v^n\}_{n \in \N}$ tends to a minimizer $u$ of $J_\lambda$.

To conclude on the convergence of the generated diffeomorphisms, we rely on \citep[Remark 1]{glaunes2005transport}, stating that
\[
    \sup_{t \in [0,1]} \left\{\norm{\phi^{u^n}_t - \phi^{u}_t}_\infty + \norm{(\phi^{u^n}_t)^{-1} - (\phi^{u}_t)^{-1}}_\infty \right\} \leq 2 c_V \norm{u^n - u}_{L^2_V} \exp\left(c_V \norm{u}_{L^2_V}\right).
\]
We showed that $\norm{u^n - u}_{L^2_V} \xrightarrow[n \to \infty]{} 0$. Consequently, the upper bound tends to zero as $n$ increases to infinity. This completes the proof of $(i)$.

Item $(ii)$ readily follows from Proposition~\ref{prop:key_prop} stating that if $\kappa \geq d$, then there exists for any $M>0$ a constant $A = A(\lambda;(\X,d);(C,q);\varepsilon;(V,p);M)>0$ such that
\[
\E \left[ \sup_{v \in L^2_{V,M} } \abs{J_{\lambda,n}(v)-J_\lambda(v)} \right] \leq \frac{A}{\sqrt{n}}.
\]
To conclude, recall that both $\{v^n\}_{n \in \N}$ and $v^*$ belong to $L^2_{V,M}$ for the constant $M$ from Lemma~\ref{lm:bounded}, and apply the classical deviation inequality

\[
J_\lambda(v^n) - J_\lambda(v^*) \leq 2 \sup_{v \in L^2_{V,M}} \abs{J_{\lambda,n}(v)-J_\lambda(v)}.
\]
\end{proof}


\bibliographystyle{abbrvnat}
\bibliography{references}

\end{document}